\documentclass[12pt, reqno]{amsart}
\usepackage{amssymb}
\usepackage{graphicx}
\usepackage{xcolor} 
\usepackage{tensor}
\usepackage[color=yellow]{todonotes}

\usepackage{enumitem}
\usepackage{fullpage} 
\usepackage{hyperref}


\definecolor{green}{rgb}{0,0.8,0} 

\newtheorem{theorem}{Theorem}[section]
\newtheorem{corollary}[theorem]{Corollary}
\newtheorem{lemma}[theorem]{Lemma}

\theoremstyle{definition}
\newtheorem{definition}[theorem]{Definition}

\theoremstyle{remark}
\newtheorem{remark}[theorem]{Remark}
\numberwithin{equation}{section}





\newcommand{\bbR}{\mathbb R}



\setlist[enumerate]{leftmargin=2em, label=(\arabic*)}
\setlist[itemize]{leftmargin=2em}






\vfuzz2pt 
\hfuzz2pt 

\title[]{Well-posedness and regularity for the fractional Navier-Stokes equations}
\author{Ning Tang}%
\address{Zhejiang University, Hangzhou, China}%
\email{math\_tn@berkeley.edu 3180102677@zju.edu.cn}%


\date{today}%

\begin{document}
\maketitle
\setcounter{tocdepth}{1}
\tableofcontents

\section{Introduction}
Motivated by the classical paper \cite{ref0} by Koch and Tataru,
we study the generalized Navier-Stokes equation in $\bbR^n$
\begin{equation}
    \begin{cases}
        u_t + (u\cdot\nabla)u+ (-\Delta)^\alpha u + \nabla p = 0\\
        \nabla\cdot u = 0\\
        u(x,0) = u_0(x),\\
    \end{cases}     
\end{equation}
where $\alpha\in(\frac12,1)$ is a real parameter.
In \cite{ref0}, they study the case $\alpha = 1$.
For the case $\alpha\in(\frac12,1)$, heuristically speaking,
the effect of the dissipative term is still stronger than the transport term. 

Note that $(-\Delta )^\alpha$ is the Fourier multiplier defined by the obvious way
\[
  \widehat{(-\Delta )^\alpha u}(\xi) = |\xi|^{2\alpha}\hat u(\xi).
\]
We use the Leray projection to rewrite the equation as 
\[
  u_t + (-\Delta)^\alpha u  = -\nabla\Pi Nu,  \quad N(u) = u\otimes u.
\] 

Now we denote the fractional heat semigroup by $S(t) = e^{-t(-\Delta)^\alpha}$ 
without specifying the domain,
where 
$$S(s)f(t) = \mathcal{F}^{-1}\left(\int e^{-s|\xi|^{2\alpha}}\hat f(\xi, t)\, d\xi\right).$$
Let $\Phi(x) = \mathcal{F}^{-1}\left(e^{-|\xi|^{2\alpha}}\right)$,
and $\Phi_t(x) = \frac1{t^n} \Phi(\frac{x}{t})$.
Then the kernel of the fractional heat semigroup $S(t)$ 
can be denoted by 
$$\Phi_{t^{\frac1{2\alpha}}}(x) = (t^{\frac1{2\alpha}})^{-n} \Phi(t^{-\frac1{2\alpha}} x) := \Phi(t,x).$$
And the kernel satisfies the estimate below    
$$|\Phi(x,t)|\lesssim \frac{t}{(t^{1/\alpha}+|x|^2)^{(n+2\alpha)/2}},$$
which can be found in \cite{ref1}.
In particular, $\Phi(t,x)$ is integrable.

From now on, we only consider the case $\frac12 < \alpha < 1$ unless other specified.
Let now $Q(x, R) = B(0,R)\times (0,R^{2\alpha})$.
\begin{definition}\label{definiton_BMO_-1}
    Now we define the space of tempered distributions $\mathcal{S}'(\mathbb R^n)$ with the following norm 
    $$
    \|v\|_{\mathcal{E}} = \sup_{x, R>0} \left(R^{-(n+2-2\alpha)}\int_{Q(x,R)} |S(t)v(y)|^2\, dt\, dy\right)^{\frac12}
    $$
    is finite. And the space is denoted by $\mathcal{E}$.
\end{definition}
The above definition is motivated by some scaling invariance property and inspired from \cite{ref0}.
Furthermore, when $\alpha\in (\frac12, 1)$, 
we could expect more properties than the case $\alpha = 1$.
We use the fractional heat kernel to present an equivalent definition of Besov space:
\begin{lemma}\label{Besov_norm_equivalence}
   The norm of the homogeneous Besov space $\dot B^{1-2\alpha,\infty}_\infty$ is
   $$\|v\|_{\dot B^{1-2\alpha,\infty}_\infty} \sim \sup_{t>0} t^{1-\frac1{2\alpha}}\|S(t)v(x)\|_{L^\infty(\mathbb R^n)}$$
   when $\alpha\in (\frac12, 1)$.
\end{lemma}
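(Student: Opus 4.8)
The plan is to establish the two-sided bound by comparing the fractional heat semigroup with the standard Littlewood–Paley characterization of $\dot B^{1-2\alpha,\infty}_\infty$. Recall that $\|v\|_{\dot B^{1-2\alpha,\infty}_\infty} \sim \sup_{j\in\mathbb Z} 2^{j(1-2\alpha)}\|\Delta_j v\|_{L^\infty}$, where $\Delta_j$ is the standard Littlewood–Paley projector to frequencies $|\xi|\sim 2^j$. Writing $s = t^{1/(2\alpha)}$ (so that $t^{1-1/(2\alpha)}$ matches the inhomogeneous power $s^{2\alpha-1}$ after substitution, but I will keep the variable $t$ throughout), the quantity to control is $\sup_{t>0} t^{1-1/(2\alpha)}\|S(t)v\|_{L^\infty}$. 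The key technical input is that the multiplier $e^{-t|\xi|^{2\alpha}}$ behaves, frequency-block by frequency-block, like a smooth bump: on the dyadic annulus $|\xi|\sim 2^j$ one has $e^{-t|\xi|^{2\alpha}} \sim e^{-ct\,2^{2\alpha j}}$, and the rescaled symbol $\eta\mapsto e^{-t|\eta|^{2\alpha}}\psi(\eta)$ (with $\psi$ a fixed annulus cutoff, $\eta = \xi/2^j$) has $L^1$ Fourier transform bounded uniformly in $t\,2^{2\alpha j}$ by $C\,e^{-c\,t2^{2\alpha j}}$; this rests on the kernel bound $|\Phi(x,t)|\lesssim t/(t^{1/\alpha}+|x|^2)^{(n+2\alpha)/2}$ quoted from \cite{ref1} together with its rescaling, and ultimately on $\alpha\in(\frac12,1)$ only through the fact that $|\xi|^{2\alpha}$ is a genuine symbol-like weight (smooth away from the origin with the right homogeneity).

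First I would prove the upper bound $\sup_{t>0} t^{1-1/(2\alpha)}\|S(t)v\|_{L^\infty} \lesssim \|v\|_{\dot B^{1-2\alpha,\infty}_\infty}$. Fix $t>0$ and decompose $S(t)v = \sum_{j\in\mathbb Z} S(t)\Delta_j v$. For each $j$, Young's inequality gives $\|S(t)\Delta_j v\|_{L^\infty} \lesssim \|K_{t,j}\|_{L^1}\,\|\Delta_j v\|_{L^\infty}$, where $K_{t,j}$ is the kernel of $S(t)$ restricted to the $j$-th annulus; by the rescaling argument above, $\|K_{t,j}\|_{L^1}\lesssim e^{-c\,t\,2^{2\alpha j}}$. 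Hence
\[
  t^{1-\frac1{2\alpha}}\|S(t)v\|_{L^\infty} \lesssim \|v\|_{\dot B^{1-2\alpha,\infty}_\infty}\sum_{j\in\mathbb Z} t^{1-\frac1{2\alpha}}\,2^{-j(1-2\alpha)}\,e^{-c\,t\,2^{2\alpha j}}.
\]
Setting $\lambda = t\,2^{2\alpha j}$, the exponent $t^{1-1/(2\alpha)}2^{-j(1-2\alpha)}$ equals $\lambda^{(2\alpha-1)/(2\alpha)}$ up to constants, so the sum is dominated by $\int_0^\infty \lambda^{(2\alpha-1)/(2\alpha)}e^{-c\lambda}\,\frac{d\lambda}{\lambda}$ (comparing sum to integral in $j$), which converges precisely because $0<1-\frac1{2\alpha}<1$ for $\alpha\in(\frac12,1)$ — this is where the hypothesis $\alpha>\frac12$ is used in an essential way, to make the negative-frequency tail summable.

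For the lower bound I would show $\|\Delta_j v\|_{L^\infty}\lesssim 2^{j(2\alpha-1)}\sup_{t>0} t^{1-1/(2\alpha)}\|S(t)v\|_{L^\infty}$ uniformly in $j$. The idea is a reproducing-formula argument: choose $t_j = 2^{-2\alpha j}$ and write $\Delta_j v = m_j(D)\,S(t_j)v$, where $m_j(\xi) = \widetilde\psi(2^{-j}\xi)\,e^{t_j|\xi|^{2\alpha}}$ with $\widetilde\psi$ a slightly fattened annulus cutoff equal to $1$ on $\mathrm{supp}\,\psi$; since $t_j|\xi|^{2\alpha}\sim 1$ on the support, $m_j$ is a smooth compactly supported bump at scale $2^j$ with $\|m_j^\vee\|_{L^1}\lesssim 1$ uniformly in $j$ (again by rescaling to a $j$-independent symbol). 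Young's inequality then yields $\|\Delta_j v\|_{L^\infty}\lesssim \|S(t_j)v\|_{L^\infty}\lesssim t_j^{-(1-1/(2\alpha))}\sup_{t} t^{1-1/(2\alpha)}\|S(t)v\|_{L^\infty} = 2^{j(2\alpha-1)}\sup_t(\cdots)$, which is exactly the claimed bound; taking the supremum over $j$ finishes the equivalence.

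The main obstacle I anticipate is the uniform kernel estimate $\|K_{t,j}\|_{L^1}\lesssim e^{-c\,t2^{2\alpha j}}$ with the sharp exponential decay in $t2^{2\alpha j}$, rather than merely a bounded constant. Obtaining the exponential gain — needed both for the convergence of the series in the upper bound and, in the form $\|m_j^\vee\|_{L^1}\lesssim 1$, for the lower bound — requires either a stationary-phase/Bernstein-type analysis of the rescaled symbol $e^{-\lambda|\eta|^{2\alpha}}\psi(\eta)$ in $\lambda$, or a direct manipulation of the kernel bound from \cite{ref1} after dyadic rescaling. Everything else (Young's inequality, the annulus decomposition, comparing the $j$-sum to a convergent integral) is routine; the only subtlety is to keep track that the convergence of $\int \lambda^{(2\alpha-1)/(2\alpha)}e^{-c\lambda}\frac{d\lambda}{\lambda}$ at both endpoints forces $\alpha\in(\frac12,1)$, consistent with the standing assumption.
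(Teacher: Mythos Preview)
Your proof is correct and follows essentially the same route as the paper: Littlewood--Paley decomposition with $L^1$ kernel bounds for the forward direction, and the reproducing multiplier $\widetilde\psi(2^{-j}D)\,e^{t_j(-\Delta)^\alpha}$ (which the paper writes simply as $S(-t)P_j$ with $t\sim 4^{-j\alpha}$) for the reverse; the only execution difference is that you sum the forward series in one stroke via the exponential estimate $\|K_{t,j}\|_{L^1}\lesssim e^{-c\,t\,2^{2\alpha j}}$, whereas the paper instead writes $S(t)P_j = t^{-N/(2\alpha)}\bigl(t(-\Delta)^\alpha\bigr)^{N/(2\alpha)}S(t)\,(-\Delta)^{-N/2}\widetilde P_j P_j$ to extract a polynomial gain $2^{-jN}$ and splits the sum at $j_0$ with $t\sim 4^{-\alpha j_0}$. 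One small correction: your integral $\int_0^\infty \lambda^{(2\alpha-1)/(2\alpha)}e^{-c\lambda}\,\frac{d\lambda}{\lambda}$ converges at $\lambda\to\infty$ for every $\alpha>0$ thanks to the exponential, so only the $\lambda\to 0$ endpoint uses $\alpha>\tfrac12$ --- the restriction $\alpha<1$ plays no role in this particular lemma.
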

Note that the equivalence to the usual the definition of Besov space can be proved
following an analogous proof of \cite[Theorem 5.3]{ref2}. We defer the proof to next section.

The above discussion motivates the introduction of the spaces $GX, GY$
of functions in $\mathbb R^n\times \mathbb R_+$ with norms
$$\|u\|_{GX} = \sup_{t>0} t^{1-\frac1{2\alpha}}\|u(t)\|_{L^\infty(\mathbb R^n)},$$
$$\|f\|_{GY} = \sup_{t>0} t^{2-\frac1{\alpha}}\|f(t)\|_{L^\infty(\mathbb R^n)}.$$

Actually, some of these function spaces are equivalent.
\begin{theorem}\label{equivalence_of_norms}
    For $h\in\mathcal{S}'(\mathbb R^n)$, the following statements are equivalent:
    \begin{enumerate}
        \item $h(x)\in \dot B^{1/\alpha-2,\infty}_\infty$.
        \item $h(x)\in \mathcal{E}$.
        \item $S(t)h(x)\in GX$.
    \end{enumerate}
    Moreover, $$\|h(x)\|_{\dot B^{1/\alpha-2,\infty}_\infty} \sim \|h(x)\|_{\mathcal{E}}\sim \|S(t)h(x)\|_{GX}.$$
    And for simplicity, 
    we denote $\dot B^{1/\alpha-2,\infty}_\infty$, $\mathcal{E}$ by $\mathcal{B}$.
\end{theorem}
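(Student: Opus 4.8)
The plan is to establish the equivalences by closing the cycle $(1)\Leftrightarrow(3)$ and $(3)\Rightarrow(2)\Rightarrow(3)$, keeping track of all constants so that the quantitative comparison of norms emerges simultaneously. The equivalence $(1)\Leftrightarrow(3)$ needs no new work: by definition $\|S(t)h\|_{GX}=\sup_{t>0}t^{1-\frac1{2\alpha}}\|S(t)h\|_{L^\infty(\R^n)}$, which is exactly the quantity that Lemma~\ref{Besov_norm_equivalence} identifies with the homogeneous Besov norm in $(1)$. Hence the whole content lies in comparing the Carleson-type quantity $\|h\|_{\mathcal E}$ with $\|S(t)h\|_{GX}$.

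For $(3)\Rightarrow(2)$ I would feed the pointwise decay $\|S(t)h\|_{L^\infty}\lesssim t^{\frac1{2\alpha}-1}\|S(t)h\|_{GX}$ directly into the definition of $\mathcal E$: on a cylinder $Q(x,R)=B(x,R)\times(0,R^{2\alpha})$,
\[
\int_{Q(x,R)}|S(t)h(y)|^2\,dt\,dy\le |B(x,R)|\int_0^{R^{2\alpha}}\|S(t)h\|_{L^\infty}^2\,dt\lesssim R^{n}\,\|S(t)h\|_{GX}^2\int_0^{R^{2\alpha}}t^{\frac1\alpha-2}\,dt.
\]
The exponent satisfies $\frac1\alpha-2>-1$ precisely because $\alpha<1$, so the last integral converges and equals a constant multiple of $R^{2-2\alpha}$; multiplying through gives the bound $R^{n+2-2\alpha}\|S(t)h\|_{GX}^2$, and dividing by $R^{n+2-2\alpha}$ and taking the supremum over $(x,R)$ yields $\|h\|_{\mathcal E}\lesssim\|S(t)h\|_{GX}$. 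This is the only place where the hypothesis $\alpha<1$ is used in an essential way; it is exactly what fails at $\alpha=1$, where $\mathcal E$ is the strictly smaller $BMO^{-1}$-type space.

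The reverse implication $(2)\Rightarrow(3)$ is the crux, and the main obstacle, because it requires upgrading the averaged $L^2$-in-$(y,t)$ control carried by $\|h\|_{\mathcal E}$ to a genuine pointwise bound on $S(t_0)h(x_0)$. My approach is to manufacture the missing time integration via the semigroup reproducing identity
\[
S(t_0)h(x_0)=\frac{2}{t_0}\int_0^{t_0/2}\bigl(S(t_0-s)[S(s)h]\bigr)(x_0)\,ds=\frac{2}{t_0}\int_0^{t_0/2}\!\!\int_{\R^n}\Phi(t_0-s,x_0-y)\,[S(s)h](y)\,dy\,ds,
\]
valid since the left side is independent of $s$. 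As $t_0-s\sim t_0$ on $s\in(0,t_0/2)$, the kernel estimate from the introduction gives $|\Phi(t_0-s,x_0-y)|\lesssim t_0\,(t_0^{1/\alpha}+|x_0-y|^2)^{-(n+2\alpha)/2}$. I would then split $\R^n$ into the ball $\{|x_0-y|\lesssim t_0^{1/(2\alpha)}\}$ and the dyadic annuli $A_j=\{|x_0-y|\sim 2^j t_0^{1/(2\alpha)}\}$, $j\ge0$, on each of which the kernel is $\lesssim t_0^{-n/(2\alpha)}2^{-j(n+2\alpha)}$. The scale $t_0^{1/(2\alpha)}$ is chosen so that $A_j\times(0,t_0/2)\subset Q(x_0,R_j)$ with $R_j\sim 2^j t_0^{1/(2\alpha)}$, and Cauchy--Schwarz in $(y,s)$ against the defining inequality of $\mathcal E$ then gives
\[
\int_0^{t_0/2}\!\!\int_{A_j}|S(s)h|\ \lesssim\ |A_j\times(0,t_0/2)|^{1/2}\bigl(R_j^{\,n+2-2\alpha}\bigr)^{1/2}\|h\|_{\mathcal E}\ \lesssim\ 2^{j(n+1-\alpha)}\,t_0^{(n+1)/(2\alpha)}\,\|h\|_{\mathcal E}.
\]
Plugging this back and summing over $j$ leaves the geometric series $\sum_{j\ge0}2^{j(1-3\alpha)}$, which converges because $\alpha>\tfrac13$, while the powers of $t_0$ collapse exactly to $t_0^{\frac1{2\alpha}-1}$; that is, $\|S(t)h\|_{GX}\lesssim\|h\|_{\mathcal E}$. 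The delicate points are the bookkeeping of the $t_0$-exponents through the Cauchy--Schwarz step and the verification that the annular series converges in the admissible range of $\alpha$; everything else is routine. Combining the three implications closes the cycle, and the constants tracked along the way give the asserted comparability of all three norms.
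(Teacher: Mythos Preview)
Your proposal is correct and follows essentially the same route as the paper: the equivalence $(1)\Leftrightarrow(3)$ is Lemma~\ref{Besov_norm_equivalence}, the implication $(3)\Rightarrow(2)$ is the same elementary time integral, and for $(2)\Rightarrow(3)$ both arguments use the semigroup reproducing identity to manufacture a time average and then a spatial decomposition against the kernel decay so as to invoke the $\mathcal E$-norm on each piece. The only differences are cosmetic: the paper first rescales to $t_0=1$, applies Jensen's inequality with weight $\Phi$ (so that the squared integrand $|S(s)h|^2$ already matches the definition of $\|\cdot\|_{\mathcal E}$, avoiding your Cauchy--Schwarz volume factor), and uses a unit-lattice rather than a dyadic-annular covering of $\mathbb R^n$.
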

We leave the proof to the following section.

\bigskip
Now we could state our main result for the wellposedness of generalized Navier-Stokes:
\begin{theorem}\label{main_theorem}
    The generalized Navier-Stokes equations 
    \begin{equation}
        \begin{cases}
            u_t + (u\cdot\nabla)u +(-\Delta)^\alpha u + \nabla p = 0\\
            \nabla\cdot u = 0\\
            u(0) = u_0\\
        \end{cases}     
    \end{equation}
    have a unique small global solution in $\mathcal{B}$ for all initial data $u_0$ with $\nabla \cdot u_0 = 0$ which are small in $GX$,
    where $\alpha\in(\frac12, 1)$.
\end{theorem}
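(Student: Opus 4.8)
The plan is to solve the equation by a contraction-mapping (Picard) argument directly in the path space with norm $\|\cdot\|_{GX}$, using Theorem~\ref{equivalence_of_norms} to pass between the data and its linear evolution. Writing the system in Duhamel form, a mild solution is a fixed point of
\[
  u = S(t)u_0 + B(u,u), \qquad B(u,v)(t) := -\int_0^t S(t-s)\,\nabla\Pi\,(u\otimes v)(s)\,ds .
\]
By Theorem~\ref{equivalence_of_norms}, $\|S(\cdot)u_0\|_{GX}\sim\|u_0\|_{\mathcal B}$, so $S(t)u_0$ is small in $GX$ precisely under the hypothesis on $u_0$; it then suffices to prove the bilinear bound $\|B(u,v)\|_{GX}\lesssim\|u\|_{GX}\|v\|_{GX}$ and to apply the standard abstract lemma: if $B$ is bilinear on a Banach space $X$ with $\|B(u,v)\|_X\le C_0\|u\|_X\|v\|_X$, then for $\|y\|_X<\tfrac1{4C_0}$ the equation $u=y+B(u,u)$ has a unique solution with $\|u\|_X\le 2\|y\|_X$, obtained as the limit of the iterates $u^{(k+1)}=y+B(u^{(k)},u^{(k)})$. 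Applying this with $X=GX$ and $y=S(t)u_0$ yields the unique small global solution.

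For the bilinear estimate, factor $B(u,v)=-L\big(N(u,v)\big)$, where $N(u,v)=u\otimes v$ and $Lf(t)=\int_0^t S(t-s)\nabla\Pi f(s)\,ds$. Since $\|(u\otimes v)(s)\|_{L^\infty}\le\|u(s)\|_{L^\infty}\|v(s)\|_{L^\infty}$ and $2-\tfrac1\alpha=2\big(1-\tfrac1{2\alpha}\big)$, the map $N$ is bounded from $GX\times GX$ to $GY$ with $\|u\otimes v\|_{GY}\le\|u\|_{GX}\|v\|_{GX}$. The substantive step is that $L$ is bounded from $GY$ to $GX$, and the analytic input is an Oseen-type kernel estimate: $S(\tau)\nabla\Pi$ is a convolution operator whose kernel $K_\tau$ is obtained by composing the kernel $\Phi(\tau,\cdot)$ — whose pointwise bound is recalled above from \cite{ref1} — with the matrix of Calder\'on--Zygmund operators forming $\nabla\Pi$; because the symbol of $\nabla\Pi$ is homogeneous of degree one, one gets $|K_\tau(x)|\lesssim(\tau^{1/\alpha}+|x|^2)^{-(n+1)/2}$, hence $\|K_\tau\|_{L^1}\lesssim\tau^{-1/(2\alpha)}$ and $\|S(\tau)\nabla\Pi f\|_{L^\infty}\lesssim\tau^{-1/(2\alpha)}\|f\|_{L^\infty}$. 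Therefore, for $f\in GY$,
\[
  \|Lf(t)\|_{L^\infty}\lesssim\|f\|_{GY}\int_0^t(t-s)^{-\frac1{2\alpha}}\,s^{-(2-\frac1\alpha)}\,ds = C_\alpha\,t^{-(1-\frac1{2\alpha})}\,\|f\|_{GY},\qquad C_\alpha:=\int_0^1(1-\sigma)^{-\frac1{2\alpha}}\sigma^{-(2-\frac1\alpha)}\,d\sigma,
\]
so $\|Lf\|_{GX}\lesssim\|f\|_{GY}$ provided $C_\alpha<\infty$. But $C_\alpha<\infty$ if and only if $\tfrac1{2\alpha}<1$ (integrability at $\sigma=1$) and $2-\tfrac1\alpha<1$ (integrability at $\sigma=0$), i.e. exactly when $\tfrac12<\alpha<1$. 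This is where the hypothesis enters, and it is the quantitative version of the statement that for $\alpha<1$ dissipation beats transport; the endpoint $\alpha=1$ is precisely the case in which the $\sigma=0$ integrability fails, which is why \cite{ref0} must enlarge the solution space by a Carleson-measure component.

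Composing, $\|B(u,v)\|_{GX}\lesssim\|u\|_{GX}\|v\|_{GX}$, and the abstract lemma delivers the unique small $u\in GX$. It then remains to record the routine consequences: $u$ solves the PDE in the distributional sense with the pressure recovered from $u$ by the Leray decomposition; $\nabla\cdot u=0$ is preserved because $S(t)u_0$ and the range of $\Pi$ are divergence free; and $u(t)\in\mathcal B$ with a bound uniform in $t$, which follows by applying $S(\tau)$ to the Duhamel identity, using $S(\tau)S(t)u_0=S(\tau+t)u_0$ together with the same $GY\to GX$ estimate and Theorem~\ref{equivalence_of_norms}. \emph{The main obstacle} is the Oseen-type kernel bound for $S(\tau)\nabla\Pi$ — gaining the full first-order derivative through the Leray projection with the stated decay — a by-now standard but slightly technical Calder\'on--Zygmund argument; everything downstream (the time integration, the iteration, and the closing remarks) is routine.
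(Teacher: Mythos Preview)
Your proposal is correct and follows the same overall architecture as the paper: rewrite the equation in Duhamel form, factor the nonlinearity as $N:GX\times GX\to GY$ followed by $L=V\nabla\Pi:GY\to GX$, and close by the standard bilinear fixed-point lemma. The Oseen-type kernel bound you invoke is exactly the paper's Lemma~\ref{estimate_Pi_nabla_Phi}.

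Where you diverge is in the proof of the key mapping $L:GY\to GX$. The paper (following the Koch--Tataru template from \cite{ref0}) first reduces by scaling to $(x,t)=(0,1)$, then performs a spatial localization $f=\chi f+(1-\chi)f$, handles the far piece via the kernel's spatial decay together with a Carleson-type bound $\sup_x\int_{Q(x,1)}|f|$, and finally splits the near piece in time into $[0,\tfrac12]$ and $[\tfrac12,1]$. You bypass all of this: from $|K_\tau(x)|\lesssim(\tau^{1/(2\alpha)}+|x|)^{-(n+1)}$ you take the $L^1$ norm to get $\|K_\tau\|_{L^1}\lesssim\tau^{-1/(2\alpha)}$ and reduce everything to the single Beta integral $\int_0^1(1-\sigma)^{-1/(2\alpha)}\sigma^{-(2-1/\alpha)}\,d\sigma$, whose convergence is equivalent to $\tfrac12<\alpha<1$. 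This is cleaner and makes the role of the hypothesis on $\alpha$ completely transparent; the paper's localization is inherited from the $\alpha=1$ case in \cite{ref0}, where the Carleson component of the norm is genuinely needed, but here (as you correctly observe) it is not. One small remark: your closing claim that $u(t)\in\mathcal B$ uniformly in $t$ is the content of Corollary~\ref{corollary_main_theorem}, which the paper treats separately and with a bit more care than ``routine''; it is not required for Theorem~\ref{main_theorem} itself.
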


\begin{corollary}\label{corollary_main_theorem}
    The global solution constructed in Theorem~\ref{main_theorem} satisfies that 
    $$u(x, t_0) \in \mathcal{B}$$
    for each fixed $t_0 > 0$.
\end{corollary}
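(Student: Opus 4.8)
The plan is to combine the Duhamel representation of the solution with the norm equivalence of Theorem~\ref{equivalence_of_norms}. Writing the bilinear term $B(u,u)(t)=\int_0^t S(t-s)\nabla\Pi N(u(s))\,ds$, the solution furnished by Theorem~\ref{main_theorem} satisfies $u(t)=S(t)u_0-B(u,u)(t)$ and has finite $GX$ norm, i.e.\ $M:=\sup_{t>0}t^{1-\frac1{2\alpha}}\|u(t)\|_{L^\infty}<\infty$; moreover the smallness hypothesis on $u_0$ means in particular $u_0\in\mathcal B$, equivalently $S(\cdot)u_0\in GX$. By Theorem~\ref{equivalence_of_norms} it suffices to prove $S(\cdot)u(t_0)\in GX$, that is $\sup_{\tau>0}\tau^{1-\frac1{2\alpha}}\|S(\tau)u(t_0)\|_{L^\infty}<\infty$, and I treat the two pieces $S(\tau)S(t_0)u_0=S(\tau+t_0)u_0$ and $S(\tau)B(u,u)(t_0)=\int_0^{t_0}S(\tau+t_0-s)\nabla\Pi N(u(s))\,ds$ separately.

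For the linear piece I use that $1-\frac1{2\alpha}>0$ when $\alpha\in(\frac12,1)$, so $\tau\mapsto\tau^{1-\frac1{2\alpha}}$ is increasing; hence $\tau^{1-\frac1{2\alpha}}\|S(\tau+t_0)u_0\|_{L^\infty}\le(\tau+t_0)^{1-\frac1{2\alpha}}\|S(\tau+t_0)u_0\|_{L^\infty}\le\|S(\cdot)u_0\|_{GX}\sim\|u_0\|_{\mathcal B}$. For the nonlinear piece I invoke the scaling bound $\|S(\sigma)\nabla\Pi f\|_{L^\infty}\lesssim\sigma^{-\frac1{2\alpha}}\|f\|_{L^\infty}$ (the same kernel estimate underlying the bilinear estimate used in Theorem~\ref{main_theorem}) together with $\|N(u(s))\|_{L^\infty}\le\|u(s)\|_{L^\infty}^2\le M^2 s^{\frac1\alpha-2}$, where $\frac1\alpha-2\in(-1,0)$, so that $\|S(\tau)B(u,u)(t_0)\|_{L^\infty}\lesssim M^2\int_0^{t_0}(\tau+t_0-s)^{-\frac1{2\alpha}}s^{\frac1\alpha-2}\,ds$ with the $s$-integral convergent at $s=0$. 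When $\tau\ge t_0$ I bound $(\tau+t_0-s)^{-\frac1{2\alpha}}\le\tau^{-\frac1{2\alpha}}$, carry out the remaining integral, and use $1-\frac1\alpha<0$ to get $\tau^{1-\frac1{2\alpha}}\|S(\tau)B(u,u)(t_0)\|_{L^\infty}\lesssim M^2\tau^{1-\frac1\alpha}t_0^{\frac1\alpha-1}\le M^2$. When $0<\tau\le t_0$ I instead use that $S(\tau)$ is bounded on $L^\infty$ uniformly in $\tau$ (its kernel $\Phi(t,\cdot)$ has $t$-independent, finite $L^1$ norm) and that $B(u,u)(t_0)=S(t_0)u_0-u(t_0)\in L^\infty$, whence $\tau^{1-\frac1{2\alpha}}\|S(\tau)B(u,u)(t_0)\|_{L^\infty}\le t_0^{1-\frac1{2\alpha}}\|B(u,u)(t_0)\|_{L^\infty}$, a finite constant depending only on $t_0$, $M$ and $\|u_0\|_{\mathcal B}$. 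Combining the three bounds gives $S(\cdot)u(t_0)\in GX$, hence $u(t_0)\in\mathcal B$ by Theorem~\ref{equivalence_of_norms}.

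The main obstacle is precisely the small-$\tau$ regime: there the weight $\tau^{1-\frac1{2\alpha}}$ degenerates while the bilinear term is only known to be in $L^\infty$ (it is not small, and the scaling estimate alone provides no gain in $\tau$), so one cannot propagate the scaling bound down to $\tau=0$ and must fall back on the crude uniform $L^\infty$ bound for $S(\tau)$. Everything else is bookkeeping with the exponents $1-\frac1{2\alpha}$, $\frac1\alpha-2$ and $1-\frac1\alpha$, each of which has the sign required exactly because $\alpha\in(\frac12,1)$; in particular the convergence of $\int_0^{t_0}s^{\frac1\alpha-2}\,ds$, which is where $\alpha>\frac12$ is used, is the point that makes the time integration harmless.
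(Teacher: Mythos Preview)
Your proof is correct. The overall strategy---split into the regimes $\tau\le t_0$ and $\tau>t_0$, use the uniform $L^\infty$ boundedness of $S(\tau)$ for the former, and the kernel bound for $\nabla\Pi S(\sigma)$ together with the integrability of $s^{\frac1\alpha-2}$ for the latter---matches the paper's. The organization differs: you decompose $u(t_0)=S(t_0)u_0-B(u,u)(t_0)$ at the outset and treat the two pieces separately, whereas the paper, for $t>t_0$, applies the algebraic identity
\[
S(t)u(t_0)=S(t_0)u(t)+S(t_0)(V\nabla\Pi Nu)(t)-S(t)(V\nabla\Pi Nu)(t_0)
\]
and bounds three terms. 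Your linear piece is dispatched in one line via the monotonicity of $\tau\mapsto\tau^{1-\frac1{2\alpha}}$, which is cleaner than routing through the identity; your nonlinear piece for $\tau>t_0$ is exactly the paper's third term, and your computation for it is essentially the same as the paper's (indeed slightly sharper in presentation, since you go straight to the $L^1$ bound $\|\nabla\Pi S(\sigma)\|_{L^\infty\to L^\infty}\lesssim\sigma^{-\frac1{2\alpha}}$ rather than carrying the full pointwise kernel). The paper's route has the minor advantage that its first two terms are controlled by $\|u\|_{GX}$ and $\|V\nabla\Pi Nu\|_{GX}$, quantities already produced in the proof of Theorem~\ref{main_theorem} and Lemma~\ref{lemma2}, so no fresh estimate is needed there; your route trades that for a more transparent decomposition.
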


Note that this corollary is not true for $\alpha = 1$.
In fact, the solution map is not continuous in $\dot B^{-1,\infty}_\infty$.
See \cite{ref6} for further discussion.

\subsection*{Acknowledgments} 
This note is written based on an undergraduate summer research project 
supervised by Professor Sung-Jin Oh at Berkeley in 2021. 
The author would like to thank him for introducing this topic and a lot of helpful discussions. 
Moreover, specially thanks for his generous help remotely during the pandemic.

\section{Proof of Theorem~\ref{equivalence_of_norms}} 
Thanks to scaling invariance, we present an important lemma at first.
\begin{lemma}
    $$ |S(t)u_0(x)| \leq Ct^{-\left(1-\frac1{2\alpha}\right)}\|u_0\|_{\mathcal{E}}$$
\end{lemma}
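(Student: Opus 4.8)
The plan is to reproduce the pointwise value $S(t)u_0(x)$ from a space-time $L^2$ average of $S(s)u_0$ over a parabolic region, and then feed in precisely the Carleson-type control built into the $\mathcal{E}$-norm. By the semigroup identity $S(t)=S(t-s)S(s)$, for every $0<s<t$ one has the reproducing formula
\[
  S(t)u_0(x)=\int_{\mathbb{R}^n}\Phi(t-s,x-y)\,S(s)u_0(y)\,dy .
\]
Averaging this over $s\in(0,t/2)$ gives
\[
  S(t)u_0(x)=\frac{2}{t}\int_0^{t/2}\!\int_{\mathbb{R}^n}\Phi(t-s,x-y)\,S(s)u_0(y)\,dy\,ds ,
\]
and this is the crucial move: the time average converts a pointwise quantity into an integral over a parabolic region, exactly the kind of region measured by $\|u_0\|_{\mathcal{E}}$. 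On the range $s\in(0,t/2)$ we have $t-s\sim t$, so the kernel bound from \cite{ref1} gives $|\Phi(t-s,x-y)|\lesssim t\,(t^{1/\alpha}+|x-y|^2)^{-(n+2\alpha)/2}$, which stays integrable and, on the core region $|x-y|\lesssim t^{1/(2\alpha)}$, is bounded by $t^{-n/(2\alpha)}$.

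Next I would localize in $y$ at the natural parabolic scale $R:=t^{1/(2\alpha)}$ (so that $R^{2\alpha}=t$): decompose $\mathbb{R}^n=\bigcup_{k\ge 0}A_k$ with $A_0=B(x,R)$ and $A_k=B(x,2^kR)\setminus B(x,2^{k-1}R)$, and apply Cauchy--Schwarz in $(y,s)$ on each annulus. For the data factor, since $(0,t/2)\subset(0,(2^kR)^{2\alpha})$ and $A_k\subset B(x,2^kR)$, the region $A_k\times(0,t/2)$ lies inside $Q(x,2^kR)$, so
\[
  \Big(\int_0^{t/2}\!\int_{A_k}|S(s)u_0(y)|^2\,dy\,ds\Big)^{1/2}\le (2^kR)^{(n+2-2\alpha)/2}\,\|u_0\|_{\mathcal{E}} .
\]
For the kernel factor, on $A_k$ one has $|\Phi(t-s,x-y)|\lesssim 2^{-k(n+2\alpha)}\,t^{-n/(2\alpha)}$ (using $(t-s)^{1/\alpha}+|x-y|^2\gtrsim 2^{2k}t^{1/\alpha}$ for $k\ge1$, and the core bound $|\Phi|\lesssim (t-s)^{-n/(2\alpha)}$ for $k=0$), together with $|A_k|\lesssim 2^{kn}R^n$; this yields
\[
  \Big(\int_0^{t/2}\!\int_{A_k}|\Phi(t-s,x-y)|^2\,dy\,ds\Big)^{1/2}\lesssim 2^{-k(n+4\alpha)/2}\,t^{(1-n/(2\alpha))/2} .
\]

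Multiplying the two factors and bookkeeping exponents, the contribution of $A_k$ is $\lesssim \|u_0\|_{\mathcal{E}}\,2^{k(1-3\alpha)}\,t^{1/(2\alpha)}$; inserting this into the averaged reproducing formula and dividing by $t$ gives $|S(t)u_0(x)|\lesssim \|u_0\|_{\mathcal{E}}\,t^{1/(2\alpha)-1}\sum_{k\ge0}2^{k(1-3\alpha)}$. The one thing that has to be checked — and the only genuine obstacle — is the convergence of this geometric series, i.e. that $1-3\alpha<0$; this is guaranteed by $\alpha>\tfrac12$ (the decay order $n+2\alpha$ of the kernel beats the growth $R^{n+2-2\alpha}$ of the $\mathcal{E}$-admissible $L^2$ mass on larger balls). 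Summing gives $|S(t)u_0(x)|\lesssim t^{-(1-\frac1{2\alpha})}\|u_0\|_{\mathcal{E}}$, as claimed. (The Fubini step and the reproducing formula are justified a posteriori from these same estimates, or by first treating a dense class of $u_0$; the core term $k=0$ is handled by the identical computation using $|\Phi(t-s,x-y)|\lesssim t^{-n/(2\alpha)}$.)
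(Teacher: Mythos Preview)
Your argument is correct and follows essentially the same strategy as the paper's proof: both use the semigroup reproducing formula $S(t)=S(t-s)S(s)$, average in $s$ over a short initial interval, decompose space at the parabolic scale, and combine the kernel decay with the Carleson-type $\mathcal{E}$-norm. The only cosmetic differences are that the paper first rescales to $t=1$, $x=0$ and then applies Jensen's inequality (using that $\Phi(1-s,\cdot)$ has bounded $L^1$ norm) before decomposing into unit lattice cubes, whereas you work directly at scale $t$, apply Cauchy--Schwarz on dyadic annuli, and track the summability condition $1-3\alpha<0$ explicitly; both routes encode the same mechanism.
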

\begin{proof}
    First, we show some scaling property.
    Note that $\mathcal{F}(u_0(\lambda x)) = \lambda^{-n}\widehat{u_0}(\frac{\xi}{\lambda})$,
    then 
    \begin{equation}\label{scaling_fractional_temperature}
        \begin{aligned}
            (S(t)u_0)(x)
         =& \int e^{ix\cdot\xi}e^{-t|\xi|^{2\alpha}}\widehat{u_0}(\xi)\, d\xi
         = \left(\lambda^{\frac1{2\alpha}}\right)^{n} 
         \int e^{i\lambda^{\frac1{2\alpha}}x\cdot\xi }e^{-t\lambda|\xi|^{2\alpha}}\widehat{u_0}(\lambda^{\frac1{2\alpha}}\xi)\, d\xi\\
         =& \int e^{i\lambda^{\frac1{2\alpha}}x\cdot\xi }e^{-t\lambda|\xi|^{2\alpha}}\widehat{v_0}(\xi)\, d\xi
         = (S(t\lambda)v_0)(\lambda^{\frac1{2\alpha}}x) 
         ,\end{aligned}
    \end{equation}
    where $v_0(y) = u_0(y/\lambda^{\frac1{2\alpha}})$ and $\lambda > 0$.
    And from \eqref{scaling_fractional_temperature}, we have
    \begin{equation}\label{scaling_L_2_average}
        \begin{aligned}
            \left(R^{-(n+2-2\alpha)}\int_{Q(x,R)} |S(t)u_0(y)|^2\, dt\, dy\right)^{\frac12}
            = \left(R^{-(n+2-2\alpha)}\int_0^{R^{2\alpha}}\int_{B(x,R)} |S(\lambda t)v_0(\lambda^{\frac1{2\alpha}}y)|^2\, dy\, dt\right)^{\frac12}&\\
            = \lambda^{-1+\frac1{2\alpha}}\left((\lambda^{-\frac1{2\alpha}}R)^{-(n+2-2\alpha)}\int_{Q(x,\lambda^{-\frac1{2\alpha}}R)} |S(t)v_0(y)|^2\, dt\, dy\right)^{\frac12}&    
        ,\end{aligned}
    \end{equation}
    where the last equality follows from a change of variable. 
    Then taking supremum implies
    $$\|u_0\|_{\mathcal{E}} = \lambda^{-\left(1-\frac1{2\alpha}\right)} \|v_0\|_{\mathcal{E}}.$$
    And note that $\|u_0\|_{\mathcal{E}}$ is translational invariant, 
    thus it suffices to show 
    $$|S(1)u_0(0)| \leq C\|u_0\|_{\mathcal{E}}.$$

    Since 
    $$|S(1)u_0(0)|^2 = \left|\int_{\mathbb R^n}\Phi(1-s, -y) (S(s)u_0)(y)\, dy\right|^2
    \lesssim \int_{\mathbb R^n}\Phi(1-s, -y) |(S(s)u_0)(y)|^2\, dy
    ,$$
    then replace the right hand side by its time average, the inequality still holds that 
    $$
    \begin{aligned}
      |S(1)u_0(0)|^2 \lesssim& 2^{2\alpha}\int_0^{2^{-2\alpha}}\int_{\mathbb R^n}\Phi(1-s, -y) |(S(s)u_0)(y)|^2\, dy\, ds\\
    \lesssim& \sum_{k\in\mathbb Z^n} \int_0^{2^{-2\alpha}}\int_{B(\frac{1}{\sqrt{2n}}k, \frac12)} 
    \Phi(1-s, -y) |(S(s)u_0)(y)|^2 \, dy\, ds\\
    \lesssim& \|u_0\|_{\mathcal{E}} \left(\sum_{k\in\mathbb Z^n} \sup_{s\in[0, 2^{-2\alpha}] \atop y\in B(\frac{1}{\sqrt{2n}}k, \frac12)} |\Phi(1-s, -y)| \right) 
    \lesssim \|u_0\|_{\mathcal{E}}\\  
    \end{aligned}
    ,$$
    where the last inequality comes from the kernel estimate 
    $$|\Phi(x,t)|\lesssim \frac{t}{(t^{1/\alpha}+|x|^2)^{(n+2\alpha)/2}}$$
    which can be found in \cite{ref1}.
    Then the result follows.
\end{proof}

Furthermore, it is straightforward that 
$$
\begin{aligned}\label{reverse_estimate}
    &\left(R^{-(n+2-2\alpha)}\int_{Q(x,R)} |S(t)u_0(y)|^2\, dt\, dy\right)^{\frac12}\\
    \leq& \left(\frac1{R^{2-2\alpha}}\int_0^{R^{2\alpha}} t^{-\left(2-\frac1{\alpha}\right)}\, dt\right)  \sup_{t>0} t^{1-\frac1{2\alpha}}\|S(t)u_0(y)\|_{L^\infty}
    \lesssim_\alpha \sup_{t>0} t^{\left(1-\frac1{2\alpha}\right)}\|S(t)u_0(y)\|_{L^\infty}      
,\end{aligned}
$$
where the last equality follows from the assumption that $\alpha\in (\frac12, 1)$.
Thus we know 
\begin{equation}\label{equivalence_norm_E}
    \|v\|_{\mathcal{E}} \sim \sup_{t>0}t^{1-\frac1{2\alpha}}\|S(t)v(x)\|_{L^\infty(\mathbb R^n)}
\end{equation}
are equivalent norms.

\bigskip
Now we prove Lemma~\ref{Besov_norm_equivalence}.
We state a more precise version of the lemma:
Let $\sigma < 0$, $f\in\mathcal{S}_h'(\mathbb R^n)$, $S(t) = e^{-t(-\Delta)^\alpha}$ is the fractional heat kernel,
(the definition of $\mathcal{S}_h'$ can be found in \cite[Definiton 1.26]{ref3}),
then we have 
\begin{equation}\label{precise_version_of_lemma}
    \sup_{t>0}t^{-\frac{\sigma}{2\alpha}}\|S(t)f\|_{L^\infty} \lesssim \|f\|_{\dot B^{\sigma,\infty}_\infty} \lesssim \sup_{t>0}t^{-\frac{\sigma}{2\alpha}}\|S(t)f\|_{L^\infty}.
\end{equation}

\begin{proof}
    \textbf{The first inequality}:
    By the Littlewood Paley decomposition characterization of Besov spaces, (c.f. \cite{ref3})
    $f = \sum_{j\in\mathbb Z} P_j f$ is true for $f\in\mathcal{S}_h'$, 
    with $P_j f\in L^\infty(\mathbb R^n)$ for $j\in\mathbb Z$ 
    and $\|P_j f\|_{L^\infty(\mathbb R^n)} = 2^{-j\sigma}\epsilon_j$ where $(\epsilon_j)_{j\in\mathbb Z}\in \ell^\infty$.
    We estimate the norm $t^{-\frac{\sigma}{2\alpha}}\|S(t)f\|_{L^\infty}$ by
    $$t^{-\frac{\sigma}{2\alpha}}\|S(t)P_j f\|_{L^\infty}
    \leq t^{-\frac{\sigma}{2\alpha}}\|P_j f\|_{L^\infty}.$$
    Furthermore, we write 
    $$S(t)P_j f = t^{-\frac{N}{2\alpha}} \left(t(-\Delta)^{\alpha}\right)^{\frac{N}{2\alpha}} 
    S(t) (-\Delta)^{-\frac{N}2} \widetilde{P_j} P_j f,$$
    then since the kernel of $\left(t(-\Delta)^{\alpha}\right)^{\frac{N}{2\alpha}} S(t)$ (Lemma~\ref{integrability_fractional_temperature}) and 
    $(-\Delta)^{-\frac{N}2} \widetilde{P_0} = 2^{jN}(-\Delta)^{-\frac{N}2}\widetilde{P_j}$ are integrable, respectively,
    therefore we have 
    $$
    \begin{aligned}
        &t^{-\frac{\sigma}{2\alpha}}\|S(t)P_j f\|_{L^\infty}
        \lesssim t^{-\frac{\sigma+N}{2\alpha}} \|(-\Delta)^{-\frac{N}2} \widetilde{P_j} P_j f\|_{L^\infty}\\
        \lesssim& t^{-\frac{\sigma+N}{2\alpha}} 2^{-jN} \|(-\Delta)^{-\frac{N}2} \widetilde{P_0}\|_{L^1} \|P_j f\|_{L^\infty}
        \lesssim t^{-\frac{\sigma+N}{2\alpha}} 2^{-jN} \|P_j f\|_{L^\infty}     
    \end{aligned}
    $$
    for $N\geq 0$.
    Hence, we know 
    $$S(t)f\in L^\infty(\mathbb R^n),\quad \forall t>0$$ 
    from the estimate above and the summablity of $\sum_{j\geq 0} 2^{-jN} <\infty$.
    
    Now we choose $j_0$ such that $ 4^{-(j_0+1)\alpha} < t \leq 4^{-j_0\alpha}$, 
    and we choose $N > -\sigma$,
    then for $j_0 > 0$,
    $$t^{-\frac{\sigma}{2\alpha}}\|S(t)f\|_{L^\infty}
    \leq C\sum_{j\leq j_0}2^{(j_0-j)\sigma}\epsilon_j
    + C_N \sum_{j > j_0}2^{(j_0+1-j)(\sigma+N)}\epsilon_j
    \leq \widetilde{C_N} \|\epsilon_j\|_{\ell^\infty(\mathbb Z)}.$$
    Thus 
    $$\sup_{t>0}t^{-\frac{\sigma}{2\alpha}}\|S(t)f\|_{L^\infty} \lesssim \|f\|_{\dot B^{\sigma,\infty}_\infty}.$$

    \textbf{The second inequality}:
    For each $j$, take some $t$ such that $4^{-(j+1)\alpha} < t \leq 4^{-j\alpha}$, 
    $$2^{j\sigma}\|P_j f\|_{L^\infty} = 2^{j\sigma}\|t^{\frac{\sigma}{2\alpha}}S(-t)P_j t^{-\frac{\sigma}{2\alpha}}S(t)f\|_{L^\infty}
    \leq 2^{-\sigma}C\sup_{t>0}\|t^{-\frac{\sigma}{2\alpha}}S(t)f\|_{L^\infty} ,$$
    where the $L^1$ norm for the kernel of $S(-t)P_j$, $\forall t\in (4^{-(j+1)\alpha}, 4^{-j\alpha}]$
    are bounded by $C$ uniformly.
\end{proof}

And note that $\sup_{t>0} t^{1-\frac1{2\alpha}} \|S(t)u\|_{L^\infty} < \infty$  
provided that 
$\|u\|_{\mathcal{E}}<\infty$
by \eqref{equivalence_norm_E},
this implicityly implies $\widehat u(\xi)$ in frequency domain has some decay properties near $0$,
that is, $\lim_{t\to \infty} \|S(t)u\|_{L^\infty} \to 0$.
For $\theta\in C^\infty_c$ such that $\theta(\xi)$ behave like $|\xi|^{4n}$ near $0$,
then we choose some constant $\delta <\frac12 - \frac1{4\alpha}$ and 
compute as follows that 
$$\begin{aligned}
    &\|\theta(tD) u\|_{L^\infty}
\lesssim \|\int \theta(t\xi)e^{t|\xi|^{2\alpha}} e^{ix\cdot\xi}\,d\xi\|_{L^1_x} \|S(t)u\|_{L^\infty}\\
\lesssim& \|\int \theta(t\xi)e^{t|\xi|^{2\alpha}} (1+|x|^2)^{\frac{n}2+\delta} e^{ix\cdot\xi}\,d\xi\|_{L^\infty_x} t^{\frac1{2\alpha}-1}\\
\lesssim& t^{\frac1{2\alpha}-1-n} \left\|\int \theta(\xi)e^{t^{1-2\alpha}|\xi|^{2\alpha}} (1+|tx|^2)^{\frac{n}2+\delta} e^{ix\cdot\xi}\,d\xi\right\|_{L^\infty_x}\\
\lesssim& t^{\frac1{2\alpha}-1+2\delta}\left\||x|^{n+2\delta}\int \theta(\xi)e^{t^{1-2\alpha}|\xi|^{2\alpha}} e^{ix\cdot\xi}\,d\xi\right\|_{L^\infty_x}\\
\lesssim& t^{\frac1{2\alpha}-1+2\delta}\left\|\int \left|(1-\Delta_\xi)^{\frac{n}2+\delta}\left(\theta(\xi)e^{t^{1-2\alpha}|\xi|^{2\alpha}}\right)\,d\xi\right|\right\|_{L^\infty_x}
\lesssim t^{\frac1{2\alpha} -1 + 2\delta} \to 0
\end{aligned}
$$
where the last inequality we use the fact $t > 1$ and $1-2\alpha < 0$.
Then we see that $u\in\mathcal{S}'_h$ from the definition in \cite[Definition 1.26, Page 22]{ref3}.

Therefore, the previously defined space $\mathcal{E}$
is indeed the Besov space $\dot B^{1-2\alpha,\infty}_\infty$,
which completes the proof of Theorem~\ref{equivalence_of_norms}.

\begin{remark}
    A well known result by Bourgain-Pavlovic \cite{ref6} had shown 
    the illposedness of Navier-Stokes when initial data is in $\dot B^{-1, \infty}_\infty$.
    Indeed, this does not contradict to our result here 
    since \eqref{reverse_estimate} is false for $\alpha = 1$.
    Hence we could not expect the wellposedness to be true for $\dot B^{-1, \infty}_\infty$.
\end{remark}

In the following sections,
we denote both $\dot B^{1/\alpha-2,\infty}_\infty$, $\mathcal{E}$ by $\mathcal{B}$
thanks to Lemma~\ref{equivalence_of_norms}.

\section{Preliminaries for some kernel estimates}
As defined in previous section,
the kernel of $S(t)$ is 
$\Phi(t,x) = (t^{\frac1{2\alpha}})^{-n} \Phi(t^{-\frac1{2\alpha}} x)$.
The operator $V$ is the parametrix for the inhomogeneous fractional heat equation with $0$ Cauchy data,
that is, $u = Vf$ if and only if
$$u_t + (-\Delta)^\alpha u = f, u(0) = 0.$$
Then $$(Vf)(t) = \int_0^t S(t-s)f(s)\, ds.$$
Hence the solutions to the fractional heat equation 
$$u_t + (-\Delta)^\alpha u = f,\quad u(x, 0) = u_0(x)$$
are given by
$$u(x,t) = (S(t)u_0)(x) + (Vf)(x,t).$$
Then we intend to discuss the mild solution of the fixed point form generalized Navier-Stokes:
$$u(x,t) = S(t)u_0(x) - (V\nabla \Pi N(u))(x,t).$$

Now we consider the symbol $m$ corresponding to the projection operator $\Pi$
to the divergence free vector fields, 
which is defined by its matrix valued Fourier multiplier $m(D_x)$,
where $$\Pi u(x) = m(D_x)u = \mathcal{F}^{-1}(m(\eta)\widehat{u}(\eta))$$
and its symbol $m$ is given by
$$(m(\eta))_{ij} = \delta_{ij} - \frac{\eta_i\eta_j}{|\eta|^2}.$$
Then the symbol $m$ satisfies the Mihlin-Hormander condition
\begin{equation}\label{Mihlin_Hormander}
    \sup_{\eta\neq 0} |\eta|^{|\alpha|}|\partial_\eta^{\alpha} m(\eta)|\leq C_\alpha
\end{equation}
for all multiindices $\alpha$; 
hence, $m(D_x)$ is a singular integral operator.
\begin{lemma}\label{estimate_Pi_Phi}
    Let $\Phi(x,t)$ as defined before in previous section,
    then combined with the Mihlin-Hormander condition \eqref{Mihlin_Hormander},
    we have the bound that
    $$|(\Pi \Phi)(x)|\leq c(1+|x|)^{-n}.$$
\end{lemma}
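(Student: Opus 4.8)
The plan is to estimate $(\Pi\Phi)(x)$ by splitting into the region near the origin, $|x|\le 1$, and the region $|x|\ge 1$, and in each region to exploit a different feature of the operator $\Pi = m(D_x)$. Recall $\Pi\Phi = K * \Phi$, where $K$ is the Calder\'on--Zygmund kernel associated to the Mihlin--H\"ormander symbol $m$; $K$ is smooth away from $0$, homogeneous of degree $-n$ (modulo the delta-type contribution from the constant part $\delta_{ij}$ of $m$, which just returns $\Phi$ itself and is harmless since $\Phi$ is Schwartz), and satisfies $|K(y)|\lesssim |y|^{-n}$ together with the cancellation condition $\int_{a<|y|<b} K(y)\,dy = 0$.

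For $|x|\le 1$ the bound $|(\Pi\Phi)(x)|\lesssim 1 \sim (1+|x|)^{-n}$ follows from the $L^\infty$-boundedness considerations for singular integrals applied to the fixed Schwartz function $\Phi$: more concretely, write $(\Pi\Phi)(x) = \int K(y)\big(\Phi(x-y)-\Phi(x)\big)\,dy$ over $|y|\le 2$ (using the cancellation of $K$ on annuli) plus $\int_{|y|>2} K(y)\Phi(x-y)\,dy$; the first integral is controlled by $\|\nabla\Phi\|_{L^\infty}\int_{|y|\le 2}|y|^{1-n}\,dy \lesssim 1$, and the second by $\|\Phi\|_{L^\infty}$ times $\int_{|y|>2}|y|^{-n}\cdot(\text{rapid decay of }\Phi)$, again $\lesssim 1$. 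Here I use that $\Phi$ is Schwartz, which I would justify from the kernel estimate $|\Phi(x)|\lesssim (1+|x|^2)^{-(n+2\alpha)/2}$ stated in the excerpt together with the corresponding bounds on derivatives (differentiating the defining Fourier integral, or citing \cite{ref1}).

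For $|x|\ge 1$ we want the genuine decay $|(\Pi\Phi)(x)|\lesssim |x|^{-n}$. Split $(\Pi\Phi)(x) = \int_{|y|\le |x|/2} K(y)\Phi(x-y)\,dy + \int_{|y|> |x|/2} K(y)\Phi(x-y)\,dy =: I + II$. In $I$ we have $|x-y|\ge |x|/2$, so $|\Phi(x-y)|\lesssim |x|^{-(n+2\alpha)}$ is much smaller than $|x|^{-n}$, and $\int_{|y|\le|x|/2}|K(y)|\,dy \lesssim \log(2+|x|)$; hence $|I|\lesssim |x|^{-(n+2\alpha)}\log(2+|x|)\lesssim |x|^{-n}$. (If one wants to avoid the logarithm, use the annular cancellation of $K$ to replace $\Phi(x-y)$ by $\Phi(x-y)-\Phi(x)$ on $|y|\le 1$, gaining a factor $|y|$.) In $II$ we have $|y|\ge |x|/2\ge 1/2$, so $|K(y)|\lesssim |y|^{-n}\lesssim |x|^{-n}$, and we are left with $|x|^{-n}\int_{|y|>|x|/2}|\Phi(x-y)|\,dy \le |x|^{-n}\|\Phi\|_{L^1}\lesssim |x|^{-n}$. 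Combining the two regions and absorbing the constant-part contribution (Schwartz, hence $\lesssim (1+|x|)^{-n}$) gives $|(\Pi\Phi)(x)|\le c(1+|x|)^{-n}$.

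The main obstacle, such as it is, is the logarithmic loss in region $I$ when $|x|\ge 1$: naively $\int_{|y|\le |x|/2}|K(y)|\,dy$ diverges logarithmically in $|x|$, so one must either spend the extra decay $2\alpha>1$ of $\Phi$ against it (which works, as above, since $2\alpha>0$ is already enough to beat $\log$) or invoke the cancellation $\int_{\text{annulus}}K = 0$ to trade a near-origin $|y|^{1-n}$ singularity for a convergent integral. Either route is standard Calder\'on--Zygmund bookkeeping; I would present the first since it is shortest. A minor preliminary step is to record that $\Phi$ and its derivatives lie in $L^1\cap L^\infty$ with the stated polynomial decay, which I take from \cite{ref1} and the explicit Fourier representation.
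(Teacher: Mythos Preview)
Your ``first route'' for region $I$ (when $|x|\ge 1$) contains an actual error, not just a logarithmic nuisance: the bound $\int_{|y|\le |x|/2}|K(y)|\,dy \lesssim \log(2+|x|)$ is false, because $|K(y)|\sim |y|^{-n}$ and this integral diverges at $y=0$. The kernel of the Riesz-transform part of $\Pi$ is only a principal-value distribution near the origin, so $I$ is not an absolutely convergent integral and cannot be bounded by putting absolute values inside. The fix you mention parenthetically --- using the annular cancellation $\int_{a<|y|<b}K(y)\,dy=0$ to replace $\Phi(x-y)$ by $\Phi(x-y)-\Phi(x)$ on $|y|\le 1$, then bounding $|\Phi(x-y)-\Phi(x)|\le |y|\sup_{|z-x|\le |x|/2}|\nabla\Phi(z)|\lesssim |y|\,|x|^{-(n+1)}$ --- is what actually makes the argument go through, and it is not optional. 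On the remaining shell $1<|y|\le |x|/2$ your $\log$ estimate is correct and is absorbed by the extra $2\alpha$ of decay in $\Phi$. A minor side remark: $\Phi$ is \emph{not} Schwartz when $\alpha<1$ (the symbol $e^{-|\xi|^{2\alpha}}$ fails to be smooth at the origin), so you should not claim it is; fortunately you only ever use the polynomial decay $|\Phi(x)|\lesssim (1+|x|)^{-(n+2\alpha)}$ and $|\nabla\Phi(x)|\lesssim (1+|x|)^{-(n+1)}$, both of which hold.

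With that correction your argument is sound, but it is genuinely different from the paper's. The paper never touches the singular kernel $K$ in physical space; instead it works entirely on the Fourier side via a Littlewood--Paley decomposition $m(D_x)\Phi=\sum_k \Phi_k$ with $\widehat{\Phi_k}(\xi)=m(\xi)P_k(\xi)e^{-|\xi|^{2\alpha}}$, rescales each piece to unit frequency, and integrates by parts $N$ times against $e^{ix\cdot\xi}$ to get $|\Phi_k(x)|\lesssim_N 2^{kn}(1+2^k|x|)^{-N}(1+4^{k\alpha})^N e^{-4^{k\alpha}}$. Summing in $k$ (splitting at $2^{k_0}\sim |x|^{-1}$ for $|x|>1$, and using the factor $e^{-4^{k\alpha}}$ to kill large $k$ when $|x|\le 1$) gives the bound directly. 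The advantage of the paper's route is that the Mihlin--H\"ormander condition is used only on the dyadic shells $|\xi|\sim 1$, so no principal-value bookkeeping is needed and the same template immediately yields the higher-order kernel bounds (their Lemmas~3.3--3.5). Your Calder\'on--Zygmund splitting is more classical and arguably more transparent for this single estimate, but you must keep the cancellation of $K$ front and center rather than relegating it to a parenthetical.
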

\begin{proof}
    By definition, $m(D_x)\Phi(x) = K*\Phi$, where $K = (m)^\lor$.
    We use the Littlewood-Paley decomposition to manipulate:
    $$m(D_x)\Phi = \sum_{k\in\mathbb Z} m(D_x)P_k \Phi = \sum_{k\in\mathbb Z} K_k*\Phi,$$
    where $K_k = (m(\eta)P_k(\eta))^\lor$
    (an abuse of notation here, in $K_k$, $P_k(\eta)$ denotes the Fourier transform of the kernel of $P_k$),
    and here $P_k(\eta)$ is a cut-off (bump) function cutting off to $|\eta|\sim 2^k$.
    Since $P_k f = f * \phi_{2^{-k}}$, where $\phi_{2^{-k}}(x) = 2^{kn}\phi(2^kx)$,
    thus $P_k(\xi) = \widehat{\phi_{2^{-k}}}(\xi) =\widehat{\phi}(2^{-k}\xi) = P_0(2^{-k}\xi)$.
    
    Now we let $\widetilde{m_k}(\eta') = m(2^k \eta')P_k(2^k\eta')$, then $|\eta'|\sim 1$
    and we can observe that $\widetilde{m_k}(\eta') = m(\eta')P_0(\eta') = \widetilde{m_0}(\eta')$, 
    thus $\widetilde{m_k}(\eta')$ also satisfies \eqref{Mihlin_Hormander} with the same constant $C_n$.
    Note that 
    $$|m(D_x)\Phi(x)|
    \leq \sum_{k}\left|\left(m(\xi)P_k(\xi)e^{-|\xi|^{2\alpha}}\right)^\lor(x)\right|.$$
    Let $\Phi_k(x) = \left(m(\xi)P_k(\xi)e^{-|\xi|^{2\alpha}}\right)^\lor(x)$, then
    \begin{equation}\label{change_of_variable_dyadic}
        \begin{aligned}
            \Phi_k(x)
    = \frac1{(2\pi)^n}\int e^{ix\cdot\xi} m(\xi)P_k(\xi)e^{-|\xi|^{2\alpha}}\, d\xi
    =& \frac{2^{kn}}{(2\pi)^n}\int e^{i (2^kx)\cdot\eta} \widetilde{m_k}(\eta) e^{-4^{k\alpha}|\eta|^{2\alpha}}\, d\eta\\
    =& \frac{2^{kn}}{(2\pi)^n}\int e^{i (2^kx)\cdot\eta} \widetilde{m_0}(\eta) e^{-4^{k\alpha}|\eta|^{2\alpha}}\, d\eta
        .\end{aligned}
    \end{equation}
    Moreover, with $N$ (assuming $N$ is even) times integration by parts and applying the Mihlin-Hormander condition \eqref{Mihlin_Hormander},
    we have 
    $$
    \begin{aligned}
    &\left|\int e^{i x\cdot\eta} \widetilde{m_0}(\eta) e^{-c|\eta|^{2\alpha}}\, d\eta\right|
    = (1+|x|^2)^{-N/2}\left|\int (1-\Delta_\eta)^{N/2}\left(e^{i x\cdot\eta}\right) \widetilde{m_0}(\eta) e^{-c|\eta|^{2\alpha}}\, d\eta\right| \\
    =& (1+|x|^2)^{-N/2} \int \left|(1-\Delta_\eta)^{N/2}\left(\widetilde{m_0}(\eta) e^{-c|\eta|^{2\alpha}}\right)\right|\, d\eta\\
    \leq& C_N (1+|x|)^{-N} \sum_{|\gamma| + |\beta|\leq N} \int |\partial^\beta \widetilde{m_0}(\eta)| |\partial^\gamma e^{-c|\eta|^{2\alpha}}|\, d\eta\\
    \lesssim&_\alpha C_N (1+c)^N (1+|x|)^{-N} \sum_{|\gamma| + |\beta|\leq N} \int_{|\eta|\sim 1}|\eta|^{-|\beta|}(1+|\eta|)^{|\gamma|(2\alpha-1)} e^{-c|\eta|^{2\alpha}}\, d\eta\\
    \lesssim&_\alpha C_N (1+c)^N (1+|x|)^{-N} e^{-c}.\\
    \end{aligned}
    $$
    Then use the estimate above, we could give a bound for $\Phi_k(x)$ in \eqref{change_of_variable_dyadic}:
    $$|\Phi_k(x)| \leq C_N 2^{kn} (1+|2^kx|)^{-N} \frac{(1+4^{k\alpha})^N}{e^{4^{k\alpha}}}.$$
    Then for $\forall k$, there exists some constant such that 
    $$|\Phi_k(x)| \lesssim C_N 2^{kn} (1+|2^kx|)^{-N}$$
    and $\exists K_N$ sufficient large, $\forall k > K_N$,
    $$|\Phi_k(x)| \leq C_N 2^{kn} (1+|2^kx|)^{-N} 4^{-kn} \leq C_N 2^{-kn},$$
    since $\frac{(1+x)^N x^{\frac{n}{\alpha}}}{e^x}\to 0$ as $x\to +\infty$.

    Now we take a fixed $N > n$ and let $k_0\in\mathbb Z^-$ such that $2^{-k_0-1}\leq |x|\leq 2^{-k_0}$, then  
    $$
    \begin{aligned}
        |m(D_x)\Phi(x)| \leq C_N \sum_{k\in\mathbb Z} 2^{kn} (1+|2^kx|)^{-N}
    \lesssim&_N  \sum_{k\leq k_0} 2^{kn} +  |x|^{-N} \sum_{k> k_0} 2^{k(n-N)}\\
    \lesssim&_N  2^{k_0n} + 2^{k_0(n-N)} |x|^{-N}
    \lesssim_N |x|^{-n}
    \end{aligned}
    $$
    when $|x|>1$.
    And for $|x| \leq 1$,
    we have
    $$|m(D_x)\Phi(x)| \leq C_N \left(\sum_{k > K_N} 2^{-kn} + \sum_{k \leq K_N} 2^{kn}\right)
    \lesssim_{N,n} 1
    .$$
    Combining the cases for $|x|>1$ and for $|x|\leq 1$, we have 
    $$|m(D_x)\Phi(x)| \lesssim (1+|x|)^{-n}.$$
\end{proof}

\begin{lemma}\label{estimate_Pi_Phi_scale}
    From Lemma~\ref{estimate_Pi_Phi}, 
    scaling shows that the kernel function $k_t(x)=\Pi\Phi_{t^{\frac1{2\alpha}}}$ of $\Pi S(t)$ satisfies
    $$|k_t(x)|\leq C \left(t^\frac1{2\alpha}+|x|\right)^{-n}.$$
\end{lemma}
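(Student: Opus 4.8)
The plan is to deduce the bound directly from Lemma~\ref{estimate_Pi_Phi} by a scaling argument, using that the Leray projection $\Pi$ commutes with $L^1$-normalized dilations. Write $\lambda = t^{\frac1{2\alpha}}$, so that $\Phi_{t^{\frac1{2\alpha}}}(x) = \lambda^{-n}\Phi(x/\lambda)$ and $k_t = \Pi\Phi_\lambda$. For a general $g$ with dilate $g_\lambda(x) = \lambda^{-n}g(x/\lambda)$ one has $\widehat{g_\lambda}(\eta) = \widehat g(\lambda\eta)$; since the symbol $(m(\eta))_{ij} = \delta_{ij} - \eta_i\eta_j/|\eta|^2$ is homogeneous of degree zero, $m(\lambda\eta) = m(\eta)$, and therefore
$$\widehat{\Pi g_\lambda}(\eta) = m(\eta)\widehat g(\lambda\eta) = m(\lambda\eta)\widehat g(\lambda\eta) = \widehat{\Pi g}(\lambda\eta) = \widehat{(\Pi g)_\lambda}(\eta),$$
so that $\Pi g_\lambda = (\Pi g)_\lambda$.

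Applying this identity with $g = \Phi$ gives $k_t(x) = (\Pi\Phi)_\lambda(x) = \lambda^{-n}(\Pi\Phi)(x/\lambda)$. Then I would invoke Lemma~\ref{estimate_Pi_Phi}, namely $|(\Pi\Phi)(y)| \leq c(1+|y|)^{-n}$, to obtain
$$|k_t(x)| \leq c\,\lambda^{-n}\Bigl(1 + \tfrac{|x|}{\lambda}\Bigr)^{-n} = c\,(\lambda + |x|)^{-n},$$
and substituting back $\lambda = t^{\frac1{2\alpha}}$ yields exactly $|k_t(x)| \leq C\bigl(t^{\frac1{2\alpha}} + |x|\bigr)^{-n}$.

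There is essentially no obstacle: the only point requiring a line of justification is the commutation of $\Pi$ with dilations, which is immediate from the degree-zero homogeneity of $m$. One could alternatively rerun the Littlewood--Paley estimate in the proof of Lemma~\ref{estimate_Pi_Phi} while carrying the extra dilation parameter through \eqref{change_of_variable_dyadic}, but the scaling reduction is cleaner and avoids repeating that computation.
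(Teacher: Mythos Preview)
Your proposal is correct and matches the paper's approach exactly: the paper gives no explicit proof of this lemma, merely stating that it follows from Lemma~\ref{estimate_Pi_Phi} by scaling, and your argument is precisely the scaling reduction that the paper leaves implicit. The key observation---that $\Pi$ commutes with $L^1$-normalized dilations because its symbol $m$ is homogeneous of degree zero---is the correct and intended justification.
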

\begin{lemma}\label{estimate_Pi_nabla_Phi}
    Similarly, we have bounds for the kernel of $\Pi\nabla S(t)$ that 
    $$|m(D_x)\nabla\Phi_{t^{\frac1{2\alpha}}}(x)|\leq C \left(t^\frac1{2\alpha}+|x|\right)^{-n-1}.$$
\end{lemma}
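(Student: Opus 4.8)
The plan is to follow the same two-step scheme used for Lemma~\ref{estimate_Pi_Phi} and Lemma~\ref{estimate_Pi_Phi_scale}: first establish the unscaled pointwise bound $|m(D_x)\nabla\Phi(x)|\lesssim(1+|x|)^{-n-1}$, and then recover the stated inequality by dilation. For the dilation step one only needs the identity $\nabla\Phi_{t^{\frac1{2\alpha}}}(x)=(t^{\frac1{2\alpha}})^{-n-1}(\nabla\Phi)(t^{-\frac1{2\alpha}}x)$ together with the fact that $m$ is homogeneous of degree $0$, so that $m(D_x)$ commutes with rescalings; this gives $m(D_x)\nabla\Phi_{t^{\frac1{2\alpha}}}(x)=(t^{\frac1{2\alpha}})^{-n-1}\big(m(D_x)\nabla\Phi\big)(t^{-\frac1{2\alpha}}x)$, and the unscaled bound immediately upgrades to $|m(D_x)\nabla\Phi_{t^{\frac1{2\alpha}}}(x)|\leq C(t^{\frac1{2\alpha}}+|x|)^{-n-1}$, exactly as in the passage from Lemma~\ref{estimate_Pi_Phi} to Lemma~\ref{estimate_Pi_Phi_scale}.

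For the unscaled bound I would rerun the Littlewood--Paley argument from the proof of Lemma~\ref{estimate_Pi_Phi} essentially verbatim, now writing $m(D_x)\nabla\Phi=\sum_{k\in\mathbb Z}\Phi_k$ with
\[
\Phi_k(x)=\big(m(\xi)(i\xi)P_k(\xi)e^{-|\xi|^{2\alpha}}\big)^\lor(x),
\]
and carrying out the rescaling $\xi=2^k\eta$ as in \eqref{change_of_variable_dyadic}. The only change is that the symbol now carries the additional factor $i\xi=2^k(i\eta)$: this produces one extra power of $2^k$ out front, so the prefactor $2^{kn}$ is replaced by $2^{k(n+1)}$, and it replaces the profile $\widetilde{m_0}(\eta)$ by $\widetilde{m_0}(\eta)(i\eta)$. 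Since $\widetilde{m_0}$ is smooth and supported in $|\eta|\sim1$, the function $\widetilde{m_0}(\eta)(i\eta)$ is again smooth, compactly supported away from the origin, and obeys the same kind of derivative estimates that $\widetilde{m_0}$ does, so the integration-by-parts step of Lemma~\ref{estimate_Pi_Phi} applies unchanged and yields
\[
|\Phi_k(x)|\leq C_N\,2^{k(n+1)}(1+|2^kx|)^{-N}\,(1+4^{k\alpha})^N e^{-4^{k\alpha}},
\]
i.e.\ $|\Phi_k(x)|\lesssim_N 2^{k(n+1)}(1+|2^kx|)^{-N}$ for every $k$, with the additional super-exponential gain $e^{-4^{k\alpha}}$ available when $k$ is large.

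It then remains to sum the dyadic pieces. Fixing $N>n+1$ and, for $|x|>1$, choosing $k_0\in\mathbb Z^-$ with $2^{-k_0-1}\leq|x|\leq2^{-k_0}$, I would split the sum at $k_0$ to get
\[
|m(D_x)\nabla\Phi(x)|\lesssim_N\sum_{k\leq k_0}2^{k(n+1)}+|x|^{-N}\sum_{k>k_0}2^{k(n+1-N)}\lesssim_N 2^{k_0(n+1)}+2^{k_0(n+1-N)}|x|^{-N}\lesssim_N|x|^{-n-1},
\]
while for $|x|\leq1$ the low modes $\sum_{k\leq K_N}2^{k(n+1)}$ contribute $\lesssim1$ and, using the super-exponential decay exactly as in Lemma~\ref{estimate_Pi_Phi}, the high modes $\sum_{k>K_N}2^{-k(n+1)}$ also contribute $\lesssim1$; combining the two regimes gives $|m(D_x)\nabla\Phi(x)|\lesssim(1+|x|)^{-n-1}$, and the dilation step of the first paragraph then finishes the proof. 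I do not anticipate any real obstacle; the only point worth a line of care is that inserting the polynomial factor $i\eta$ does not damage the integration-by-parts estimate, which is clear because every $\eta$-derivative falls on the fixed smooth, compactly supported profile $\widetilde{m_0}(\eta)(i\eta)e^{-c|\eta|^{2\alpha}}$.
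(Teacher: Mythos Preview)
Your proposal is correct and matches the paper's intended argument: the lemma is stated without proof, with the word ``Similarly'' indicating that one should rerun the Littlewood--Paley computation from Lemma~\ref{estimate_Pi_Phi} with the extra symbol factor $i\xi$, exactly as you describe. The only change is the prefactor $2^{kn}\to2^{k(n+1)}$ and the corresponding shift $N>n\to N>n+1$ in the summation step, both of which you handle correctly.
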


\begin{lemma}\label{integrability_fractional_temperature}
    The kernel $k(t,x)$ of $\left(t(-\Delta)^{\alpha}\right)^{\frac{N}{2\alpha}} S(t)$ 
    is integrable and its $L^1$ norm is uniformly bounded.
\end{lemma}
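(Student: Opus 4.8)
The plan is to remove the parameter $t$ by a scaling change of variables and then show that one fixed kernel lies in $L^1(\mathbb{R}^n)$. The operator $\left(t(-\Delta)^\alpha\right)^{\frac{N}{2\alpha}}S(t)$ has Fourier multiplier $\left(t|\xi|^{2\alpha}\right)^{\frac{N}{2\alpha}}e^{-t|\xi|^{2\alpha}} = t^{\frac{N}{2\alpha}}|\xi|^N e^{-t|\xi|^{2\alpha}}$, so substituting $\eta = t^{\frac1{2\alpha}}\xi$ in the defining Fourier integral gives, exactly as for $\Phi$, that $k(t,x) = t^{-\frac{n}{2\alpha}}\,g\!\left(t^{-\frac1{2\alpha}}x\right)$ with $g := \mathcal{F}^{-1}\!\left(|\eta|^N e^{-|\eta|^{2\alpha}}\right)$. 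Hence $\|k(t,\cdot)\|_{L^1(\mathbb{R}^n)} = \|g\|_{L^1(\mathbb{R}^n)}$ for every $t>0$, and the whole statement reduces to proving $g\in L^1(\mathbb{R}^n)$. I will carry this out assuming $N$ is an even integer, which already covers every use made in the paper (e.g. $N=2$ suffices in Lemma~\ref{Besov_norm_equivalence}); the general real case is commented on below.

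Writing $m(\eta)=|\eta|^N e^{-|\eta|^{2\alpha}}$, I would establish $g\in L^1$ via the pointwise bound $|g(x)|\lesssim (1+|x|)^{-n-1}$. For $|x|\le 1$ this is immediate: $m$ decays faster than any power at infinity and $|\eta|^N\in L^1_{\mathrm{loc}}$ near $0$ (since $N\ge 0>-n$), so $m\in L^1$ and $g=\mathcal{F}^{-1}(m)$ is bounded and continuous. For $|x|\ge 1$ I would use, for multi-indices $\gamma$ with $|\gamma|=n+1$, that $x^\gamma g(x)$ equals a constant times $\int e^{ix\cdot\eta}\,\partial_\eta^\gamma m(\eta)\,d\eta$, so that (using $|x|^{n+1}\lesssim\sum_j|x_j|^{n+1}$) one gets $|g(x)|\lesssim |x|^{-n-1}\max_{|\gamma|=n+1}\|\partial_\eta^\gamma m\|_{L^1}$ once these norms are finite. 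Away from the origin $\partial_\eta^\gamma m$ equals $e^{-|\eta|^{2\alpha}}$ times a function of polynomial growth, hence is integrable there; near the origin, Leibniz' rule and the Fa\`a di Bruno formula, together with $\bigl|\partial_\eta^{\beta_1}|\eta|^N\bigr|\lesssim |\eta|^{N-|\beta_1|}$ (which vanishes once $|\beta_1|>N$, as $|\eta|^N$ is then a polynomial) and $\bigl|\partial_\eta^{\beta_2}e^{-|\eta|^{2\alpha}}\bigr|\lesssim |\eta|^{2\alpha-|\beta_2|}$ for $|\beta_2|\ge 1$, give $|\partial_\eta^\gamma m(\eta)|\lesssim |\eta|^{N+2\alpha-(n+1)}$ near $0$; and $\int_{|\eta|\le 1}|\eta|^{N+2\alpha-(n+1)}\,d\eta<\infty$ precisely because $N+2\alpha-1>0$, i.e. because $\alpha>\tfrac12$. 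Combining the two ranges yields $|g(x)|\lesssim (1+|x|)^{-n-1}\in L^1(\mathbb{R}^n)$, and the lemma follows.

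For a general real $N\ge 0$ the only step to revisit is the origin estimate when $N$ is small, since then $\partial_\eta^\gamma|\eta|^N$ no longer vanishes; one would split $m=\chi m+(1-\chi)m$ with $\chi$ a smooth cutoff to $\{|\eta|\le 1\}$, note that $(1-\chi)m$ has Schwartz-class inverse transform, and for $\chi m$ Taylor-expand $e^{-|\eta|^{2\alpha}}=\sum_{k=0}^K\frac{(-1)^k}{k!}|\eta|^{2\alpha k}+O(|\eta|^{2\alpha(K+1)})$, using that $\mathcal{F}^{-1}(|\eta|^s\chi(\eta))$ — a mollified homogeneous distribution of degree $-n-s$ — decays like $|x|^{-n-s}$, which is integrable for each $s=N+2\alpha k>0$, the remainder being made as regular (hence its inverse transform as rapidly decaying) as desired by taking $K$ large. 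The step I expect to be genuinely delicate is exactly this origin analysis: the symbol $m$ is never $C^\infty$ at $\eta=0$ — both because $|\eta|^N$ need not be a polynomial and because $|\eta|^{2\alpha}$ is only $C^1$ when $\alpha<1$ — so one must track the precise order of the singularity of $\partial_\eta^\gamma m$ and confirm it stays strictly milder than $|\eta|^{-n}$ at differentiation order $n+1$, which is where the hypothesis $\tfrac12<\alpha<1$ is crucially used.
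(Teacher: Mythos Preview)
Your argument is correct for even $N$ (which is all the paper ever needs) and takes a genuinely different, more hands-on route than the paper. The paper does not differentiate the full symbol $|\eta|^N e^{-|\eta|^{2\alpha}}$ directly; instead it inserts a Littlewood--Paley decomposition $k(1,\cdot)=\sum_j K_j$ with $K_j=\mathcal{F}^{-1}\bigl(|\xi|^N P_j(\xi)e^{-|\xi|^{2\alpha}}\bigr)$, so that each piece is frequency-localized to the annulus $|\xi|\sim 2^j$ where the symbol is $C^\infty$. Integrating by parts $M>n+N$ times on each $K_j$ and summing over $j$ then gives the stronger pointwise bound $|k(1,x)|\lesssim (1+|x|)^{-n-N}$.

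What the dyadic approach buys is uniformity: because every $P_j$ excises the origin, there is no singularity of $|\eta|^N$ or of $e^{-|\eta|^{2\alpha}}$ to track, and the proof goes through verbatim for arbitrary real $N\ge 0$ and any $\alpha>0$. By contrast, your direct integration by parts leans on the vanishing of $\partial^{\beta_1}|\eta|^N$ for $|\beta_1|>N$; when $N$ is not an even integer that term survives and contributes the borderline-nonintegrable $|\eta|^{N-(n+1)}$ near $0$, which is exactly why you are forced into the Taylor-expansion workaround you sketch (that sketch is morally right but would need a precise statement about the $|x|^{-n-s}$ asymptotics of $\mathcal{F}^{-1}(\chi|\eta|^s)$ to be complete). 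On the other hand, your route is more elementary---no dyadic machinery---and makes transparent exactly where the standing hypothesis $\alpha>\tfrac12$ enters, through the local integrability of $|\eta|^{N+2\alpha-(n+1)}$; in the paper's proof that hypothesis plays no role in this lemma.
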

\begin{proof}
    By scaling property, it suffices to show $\|k(1,x)\|_{L^1}<\infty$.
    Let 
    $$
    \begin{aligned}
        K_j(x) &= (-\Delta)^{N/2} P_j\Phi(x)
        = \frac1{(2\pi)^n}\int e^{ix\cdot\xi} |\xi|^N P_j(\xi)e^{-|\xi|^{2\alpha}}\, d\xi\\
        &= \frac{2^{j(n+N)}}{(2\pi)^n}\int e^{i (2^j x)\cdot\xi} |\xi|^N P_0(\xi) e^{-4^{j\alpha}|\xi|^{2\alpha}}\, d\xi     
    .\end{aligned}
    $$
    Since
    $$
    \begin{aligned}
    &\left|\int e^{i x\cdot\eta} |\eta|^N P_0(\eta) e^{-c|\eta|^{2\alpha}}\, d\eta\right|
    \leq C_M (1+|x|)^{-M} \int \left|(1-\Delta_\eta)^{M/2}\left(|\eta|^N P_0(\eta) e^{-c|\eta|^{2\alpha}}\right)\right|\, d\eta\\
    \lesssim& C_M (1+|x|)^{-M} \sum_{|\delta|\leq M} \int_{|\eta|\sim 1} |\partial^\delta e^{-c|\eta|^{2\alpha}}|\, d\eta
    \lesssim_\alpha C_M (1+c)^M (1+|x|)^{-M} e^{-c}
    ,\end{aligned}
    $$
    we have  
    $$|K_j(x)|\leq C_M 2^{j(n+N)}(1+|2^j x|)^{-M} (1+4^{j\alpha})^M e^{-4^{j\alpha}}.$$
    Then for $|x|\sim 2^{-j_0}$, $M>n+N$, we have 
    $$|k(1,x)| = |(-\Delta)^{N/2} \Phi(x)|
    \leq \sum_{j\in\mathbb Z} |K_j(x)|
    \leq C_M (\sum_{j \leq j_0} 2^{j(n+N)}  + \sum_{j>j_0} 2^{j(n+N-M)}|x|^{-M}) \lesssim |x|^{-n-N}
    .$$
    And there exists $J$ such that $\forall j> J$, such that   
    $|K_j(x)|\leq C_M 2^{-j}(1+|2^j x|)^{-M}$
    Then 
    $$|k(1,x)| 
    \leq \sum_{j\in\mathbb Z} |K_j(x)|
    \leq C_M \sum_{j\leq J}2^{j(n+N)} + \sum_{j>J} 2^{-j} \lesssim 1.$$
    Hence $|k(1,x)|\lesssim (1+|x|)^{-n-N}$.
    In particular, $k(1,x)$ is integrable.
\end{proof}

Now we devote a full section to the proof of Theorem~\ref{main_theorem}.

\section{Proof of Theorem~\ref{main_theorem} and Corollary~\ref{corollary_main_theorem}}
Let 
$$\Psi(u) = S(t)u_0(x) - (V\nabla \Pi N(u))(x,t),$$
then we solve fixed point form of generalized Navier-Stokes:
\begin{equation}
    \Psi u = u.
\end{equation}
For small initial data we want to solve this in $GX$ using a fixed point argument. 
Since $N$ is quadratic, the small Lipschitz constant follows for small initial data 
if the nonlinearity has the correct mapping properties. 
Hence the result is a consequence of the following two lemmas:
\begin{lemma}\label{lemma1}
    $N$ maps $GX$ into $GY$.
\end{lemma}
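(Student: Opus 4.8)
The plan is to exploit the fact that $N(u) = u \otimes u$ acts pointwise in $(x,t)$, so that the desired bound reduces to the elementary submultiplicativity of the $L^\infty$ norm together with a bookkeeping check on the scaling weights. Concretely, I would first fix $t > 0$ and estimate, componentwise,
$$
\|N(u)(\cdot,t)\|_{L^\infty(\mathbb R^n)} = \sup_{i,j}\|u_i(\cdot,t)\,u_j(\cdot,t)\|_{L^\infty(\mathbb R^n)} \leq \|u(\cdot,t)\|_{L^\infty(\mathbb R^n)}^2,
$$
where one fixes once and for all a submultiplicative norm on the space of $n\times n$ matrices (the choice is immaterial up to a dimensional constant). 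This step uses nothing beyond the definition of $N$ and requires no information about the flow.

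The second step is the exponent identity $2 - \tfrac1\alpha = 2\bigl(1 - \tfrac1{2\alpha}\bigr)$, which matches the $GY$-weight with the square of the $GX$-weight. Multiplying the pointwise bound by $t^{2 - 1/\alpha}$ gives
$$
t^{2-\frac1\alpha}\|N(u)(\cdot,t)\|_{L^\infty(\mathbb R^n)} \leq \Bigl(t^{1-\frac1{2\alpha}}\|u(\cdot,t)\|_{L^\infty(\mathbb R^n)}\Bigr)^2 \leq \|u\|_{GX}^2,
$$
and taking the supremum over $t > 0$ yields $\|N(u)\|_{GY} \leq \|u\|_{GX}^2$. In particular $N(u) \in GY$ whenever $u \in GX$, which is the assertion.

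Since there is essentially no analytic content here, I do not expect any genuine obstacle; the only point deserving care is the scaling arithmetic linking the weights $t^{1-1/(2\alpha)}$ and $t^{2-1/\alpha}$, which is exactly why the spaces $GX$ and $GY$ were defined with those exponents. It is worth recording, for the fixed point argument to follow, the polarized version: writing $N(u) - N(v) = u\otimes(u-v) + (u-v)\otimes v$ and running the identical estimate gives
$$
\|N(u) - N(v)\|_{GY} \lesssim \bigl(\|u\|_{GX} + \|v\|_{GX}\bigr)\,\|u - v\|_{GX},
$$
so that $N$ is Lipschitz on bounded sets of $GX$ with small constant near the origin; this is the form in which the lemma will be used together with the mapping properties of $V\nabla\Pi$.
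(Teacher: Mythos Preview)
Your proof is correct and matches the paper's treatment: the paper itself dismisses this lemma as ``obvious and straightforward'' without writing out any details, and what you have supplied is precisely the two-line verification one expects --- pointwise submultiplicativity of the tensor product together with the weight identity $2-\tfrac{1}{\alpha}=2\bigl(1-\tfrac{1}{2\alpha}\bigr)$. The polarized Lipschitz estimate you record at the end is also exactly what the paper uses in the contraction step immediately following the lemma.
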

\begin{lemma}\label{lemma2}
    $V\nabla\Pi$ maps $GY$ into $GX$.
\end{lemma}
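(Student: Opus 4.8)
The plan is to reduce Lemma~\ref{lemma2} to Young's convolution inequality together with a Beta-function time integral, using the kernel bound already recorded in Lemma~\ref{estimate_Pi_nabla_Phi}. Since $S(t)$, $\nabla$ and $\Pi$ are all (matrix-valued) Fourier multipliers they commute, so for $f \in GY$ I would first write
$$(V\nabla\Pi f)(t) = \int_0^t S(t-s)\,\nabla\Pi f(s)\, ds = \int_0^t \bigl(\nabla\Pi S(t-s)\bigr) f(s)\, ds,$$
where $\nabla\Pi S(\tau)$ acts by convolution (componentwise in the tensor structure of $f$) with a kernel bounded by $C(\tau^{1/(2\alpha)}+|x|)^{-n-1}$ according to Lemma~\ref{estimate_Pi_nabla_Phi}.

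The next step is the fixed-time estimate. By Young's inequality,
$$\|\nabla\Pi S(\tau)g\|_{L^\infty} \le \Bigl\| C(\tau^{1/(2\alpha)}+|\cdot|)^{-n-1}\Bigr\|_{L^1}\,\|g\|_{L^\infty},$$
and passing to polar coordinates and rescaling the radial variable by $\tau^{1/(2\alpha)}$ gives $\int_{\mathbb R^n}(\tau^{1/(2\alpha)}+|x|)^{-n-1}\,dx = c_n\,\tau^{-1/(2\alpha)}$; hence $\|\nabla\Pi S(\tau)g\|_{L^\infty}\lesssim \tau^{-1/(2\alpha)}\|g\|_{L^\infty}$. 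Inserting the $GY$ bound $\|f(s)\|_{L^\infty}\le s^{-(2-1/\alpha)}\|f\|_{GY}$ and combining,
$$\|(V\nabla\Pi f)(t)\|_{L^\infty} \lesssim \|f\|_{GY}\int_0^t (t-s)^{-1/(2\alpha)}\, s^{-(2-1/\alpha)}\, ds.$$
The integral is a Beta integral: with $a = 1-\tfrac1{2\alpha}$ and $b = \tfrac1\alpha - 1$ it equals $B(a,b)\,t^{a+b-1}$, and since $a+b-1 = \tfrac1{2\alpha}-1 = -(1-\tfrac1{2\alpha})$, one gets $t^{1-1/(2\alpha)}\|(V\nabla\Pi f)(t)\|_{L^\infty}\lesssim \|f\|_{GY}$. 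Taking the supremum over $t>0$ yields $\|V\nabla\Pi f\|_{GX}\lesssim \|f\|_{GY}$, which is the assertion.

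The main obstacle — and really the only delicate point — is the convergence of that Beta integral at both endpoints, which is exactly where the restriction $\alpha\in(\tfrac12,1)$ enters: near $s=t$ one needs $\tfrac1{2\alpha}<1$, i.e.\ $\alpha>\tfrac12$, and near $s=0$ one needs $2-\tfrac1\alpha<1$, i.e.\ $\alpha<1$ (equivalently $b>0$). This is consistent with the remark explaining why the endpoint $\alpha=1$, corresponding to $\dot B^{-1,\infty}_\infty$, is excluded. Everything else is a routine application of Young's inequality and the kernel estimates established in the previous section.
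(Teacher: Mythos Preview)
Your proof is correct, and it is considerably shorter than the paper's own argument. The paper follows the Koch--Tataru template: it first rescales to $(x,t)=(0,1)$, then splits $f=\chi f+(1-\chi)f$ with $\chi=\mathbf 1_{B(0,2)\times[0,1]}$, treats the far piece by a lattice sum using the kernel decay $(t^{1/2\alpha}+|x|)^{-n-1}$ together with the Carleson-type bound $\sup_x R^{-(n+2-2\alpha)}\int_{Q(x,R)}|f|$, and for the near piece further splits the time interval into $[0,\tfrac12]$ and $[\tfrac12,1]$. That machinery is what one genuinely needs when $\alpha=1$, where the relevant norm is of Carleson type and not a pointwise weighted $L^\infty$ norm.

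You instead exploit the fact that for $\alpha\in(\tfrac12,1)$ the space $GY$ \emph{is} a pure weighted $L^\infty$ space, so Young's inequality in $x$ followed by the Beta integral in $s$ closes immediately. This bypasses the localization and lattice-sum steps entirely. The trade-off is transparency about why $\alpha=1$ fails: in your argument it shows up only as the divergence of $B(a,b)$ at $b=0$, whereas the paper's longer route makes the Carleson structure visible and is closer to a proof that would survive at the endpoint (with $GY$ replaced by its Carleson analogue). Both are valid here; yours is the more efficient for the stated range.
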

Since $V\nabla\Pi$ is linear, and from Lemma~\ref{lemma2},
$\|V\nabla\Pi f\|_{GX} \leq C \|f\|_{GY}$.
We assume $\|u_0\|_{\mathcal{B}}\leq \epsilon$.
Then for $u, v\in B_{GX}(0,2\varepsilon C)$,
$$\|\Psi(u)\|_{GX} \leq \|S(t)u_0(x)\|_{GX} + \|V\nabla \Pi N(u)\|_{GX}
\leq \|u_0\|_{\mathcal{B}}+  C\|u\|^2_{GX}
\leq \epsilon + C\|u\|^2_{\mathcal{B}}
.$$
And
$$\|\Psi(u)-\Psi(v)\|_{GX}\leq C\|N(u)-N(v)\|_{GY} 
\leq C\|u-v\|_{GX}(\|u\|_{GX} + \|v\|_{GX})
\leq 2\varepsilon C \|u-v\|_{GX}
.$$
Thus for $\epsilon$ sufficiently small, namely $\epsilon_n\leq \frac{1}{4C^2 + 1}$,
$\Psi$ maps to $B_{GX}(0,2\varepsilon C)$ to itself and is a contraction mapping.
Therefore, we apply the fixed point theorem and this completes the proof of Theorem~\ref{main_theorem}.

Now, we turn to the proof of these two lemmas. 
The proof of Lemma~\ref{lemma1} is obvious and straightforward.
Then for Lemma~\ref{lemma2}, we prove as follows.
\begin{proof}[Proof of Lemma~\ref{lemma2}]
    It suffices to prove the pointwise estimate
    \begin{equation}\label{pointwise_estimate}
        |V\nabla\Pi f(x,t)|\leq ct^{-(1-\frac1{2\alpha})}\|f\|_{GY}.
    \end{equation}

    \textbf{Step 1.Scaling.} 
    We claim that this estimate is scale invariant and translation invariant,
    which is the motivation of the definition of function spaces.
    Now we check this property:
    For \eqref{pointwise_estimate}, 
    suppose we have proved the result for $t = 1, x = 0$,
    then apply the result to $g(x,t) = f(\lambda^{\frac1{2\alpha}}x, \lambda t)$, 
    by a similar argument to \eqref{scaling_L_2_average}, we have
    $\|g\|_{GY} = \lambda^{-(2-1/\alpha)} \|f\|_{GY}$
    and 
    $$
    \begin{aligned}
        &V\nabla\Pi g(0,1)\\
    =& \int_0^1\int_{\mathbb R^n} \Phi(s,y) \nabla\Pi g(-y, 1-s)\, dy\, ds\\
    =& \lambda^{\frac1{2\alpha}} \int_0^1 \int_{\mathbb R^n} \Phi(s,y) (\nabla\Pi f)(\lambda^{\frac1{2\alpha}}(-y), \lambda(1-s))\, dy\, ds\\
    =& \lambda^{\frac1{2\alpha}} \lambda^{\frac{n}{2\alpha}} \int_0^\lambda \int_{\mathbb R^n} \Phi(s,y) (\nabla\Pi f)((-y), \lambda-s))\, d(\lambda^{-\frac1{2\alpha}}y)\, d(\frac{s}{\lambda})\\
    =& \lambda^{\frac1{2\alpha}-1} \int_0^\lambda \int_{\mathbb R^n} \Phi(s,y) (\nabla\Pi f)((-y), \lambda-s))\, dy\, ds
    .\end{aligned}
    $$
    Thus take $\lambda = t$, then the estimate for $(0,t)$ follows from the above scale invariance property.
    And the translation invariance is obvious.
    
    Thus it suffices to prove 
    \begin{equation}\label{step_1_formula_1}
        |V\nabla\Pi f(0,1)|\leq c\|f\|_{GY}.
    \end{equation}

    \textbf{Step 2: Localization.}
    Let $\chi$ be the characteristic function of $B(0,2)\times[0,1]$.
    Then $f = \chi f + (1-\chi) f$.
    Clearly both components are still in $GY$.
    Since for $g\in GY$,
    $$\Pi_x g(x-y) = \mathcal{F}_x^{-1} \left(\left(\delta_{ij}-\frac{\xi_i\xi_j}{|\xi|^2}\right)\widehat g(\xi)e^{-iy\cdot\xi}\right),$$
    then we calculate by Parseval indentity,
    \begin{equation}\label{Pi_nabla_change_position}
        \begin{aligned}
            &\int_{\mathbb R^n} h(y)\Pi_x g(x-y)\, dy
         = \int_{\mathbb R^n} \widehat h(\xi) \left(\delta_{ij}-\frac{(-\xi_i)(-\xi_j)}{|-\xi|^2}\right)\widehat g(-\xi)e^{ix\cdot(-\xi)}\, d\xi\\
         =& \int_{\mathbb R^n} \left(\delta_{ij}-\frac{\xi_i\xi_j}{|\xi|^2}\widehat h(\xi)\right) \widehat g(-\xi)e^{-ix\cdot\xi}\, d\xi
         = \int_{\mathbb R^n} \left(\Pi_y h(y)\right) g(x-y)\, dy    
         .\end{aligned}
    \end{equation}
    Then we know from integration by parts and applying the equation above to $\nabla\Phi$ and $g$,
    $$
    V\nabla\Pi g(x,t) 
    = \int_{\mathbb R^n} \int_0^t \Phi(s,y) \nabla\Pi g(x-y, t-s)\, ds\, dy
    = \int_{\mathbb R^n} \int_0^t \Pi\nabla \Phi(s,y) g(x-y, t-s)\, ds\, dy
    .$$
    Thus the kernel $K$ of $V\nabla\Pi$ is $K = \Pi\nabla\Phi_{s^{\frac1{2\alpha}}}(y)$,
    then from Lemma~\ref{estimate_Pi_nabla_Phi}, we know that 
    \begin{equation}\label{step_1_kernel_bound}
        |K(x,t)|\leq C(t^{\frac1{2\alpha}}+|x|)^{-n-1}.
    \end{equation}
    Then we claim that 
    \begin{equation} \label{step_1_cutoff}
        \|V\nabla\Pi (1-\chi)f\|_{L^\infty(Q(0,1))} 
        \leq C \sup_{x\in\mathbb R^n} \int_{Q(x,1)} |f|\, dy\, dt .
    \end{equation}
    Take $(x_0, t_0)\in Q(0,1)$, then 
    $$
    \begin{aligned}
        &|V\nabla\Pi (1-\chi)f(x_0, t_0)|\\
    \leq& C\int_{\mathbb R^n\setminus B(0,2)}\int_0^1 \left((t_0-s)^{\frac1{2\alpha}}+|x_0-y|\right)^{-(n+1)} |f(y,s)|\, ds\, dy\\
    \leq& C\sum_{k\in(\frac1{\sqrt{n}}\mathbb Z)^n, |k|>2} \int_{B(k,1)}\int_0^1 \frac1{|x_0-y|^{n+1}} |f(y,s)|\, ds\, dy\\
    \leq& C \sup_{x\in\mathbb R^n} \int_{Q(x,1)} |f|\, dy\, dt  \sum_{k\in(\frac1{\sqrt{n}}\mathbb Z)^n, |k|> 2} \frac1{(|k|-2)^{n+1}}\\
    =& C \sup_{x\in\mathbb R^n} \int_{Q(x,1)} |f|\, dy\, dt \left(\widetilde C + \sum_{k\in(\frac1{\sqrt{n}}\mathbb Z)^n, |k|> 4} \frac1{(|k|-2)^{n+1}}\right)\\
    \leq& C \sup_{x\in\mathbb R^n} \int_{Q(x,1)} |f|\, dy\, dt \left( \widetilde C + \int_{|z|>3} \frac1{(|z|-2)^{n+1}}\, dz \right)
    = C' \sup_{x\in\mathbb R^n} \int_{Q(x,1)} |f|\, dy\, dt.\\
    \end{aligned}
    $$
    Thus we have verified the claim \eqref{step_1_cutoff}.
    Actually, the claim \eqref{step_1_cutoff} is much stronger than 
    \eqref{step_1_formula_1} under the assumption that
    $f$ supported outside $B(0,2)\times [0,1]$ thanks to the following observation:
    \begin{equation}\label{GY_L2_estimate}
        \begin{aligned}
            R^{-(n+2-2\alpha)}\int_{Q(x,R)} |f(y,t)|\, dt\, dy
            \lesssim \frac1{R^{2-2\alpha}}\int_0^{R^{2\alpha}} t^{-\left(2-\frac1{\alpha}\right)}\, dt  \sup_{t>0} t^{2-\frac1{\alpha}}\|f(\cdot,t)\|_{L^\infty}&\\
            = \sup_{t>0} t^{\left(2-\frac1{\alpha}\right)}\|f(\cdot,t)\|_{L^\infty} = \|f\|_{GY}.&     \\
        \end{aligned}
    \end{equation}
    Hence, it suffices to look now at $\chi f$;
    namely, without any restriction in generality, 
    we can and do assume in the sequel that $f$ is supported in $B(0, 2)\times [0, 1]$.
    
    \textbf{Step 3: The pointwise estimate.}
    We intend to show the pointwise estimate \eqref{step_1_formula_1} 
    when $f$ is supported in $B(0, 2)\times[0, 1]$.
    Actually, it follows easily from the kernel bound \eqref{step_1_kernel_bound}.
    Since 
    $$\begin{aligned}
        |V\nabla\Pi f(0,1)|
        \leq& \int_{\mathbb R^n}\int_0^{\frac12} \left((1-t)^{\frac1{2\alpha}}+|y|\right)^{-(n+1)} |f(y, t)| \, dt\, dy\\
        +& \int_{\mathbb R^n}\int_{\frac12}^1 \left((1-t)^{\frac1{2\alpha}}+|y|\right)^{-(n+1)} |f(y, t)| \, dt\, dy       
    .\end{aligned}
    $$
    For the part of $f$ in $B(0, 2)\times [0, \frac12]$ we can use the $L^1$ bound on $f$
    combined with the boundedness of the kernel away from $0$, that is,
    $$\left|\int_{B(0,2)}\int_0^{\frac12} \left((1-t)^{\frac1{2\alpha}}+|y|\right)^{-(n+1)} f(y, t) \, dt\, dy\right|
    \lesssim \int_{B(0,2)}\int_0^{\frac12} |f(y,t)|\, dt\, dy 
    \lesssim \|f\|_{GY},$$
    where we use the observation \eqref{GY_L2_estimate} in Step 2 here.
    For the part of $f$ in $B(0, 2)\times[\frac12 , 1]$ we can use the $L^\infty$ bound on $f$ combined with the
    integrability of the kernel at $0$, that is,
    $$
    \begin{aligned}
        &\left|\int_{\mathbb R^n}\int_{\frac12}^1 \left((1-t)^{\frac1{2\alpha}}+|y|\right)^{-(n+1)} f(y, t)\, dt\, dy  \right|\\
    \leq& \|f\|_{L^\infty(\mathbb R^n\times [\frac12, 1])} \int_{B(0,2)}\int_{\frac12}^1 \left((1-t)^{\frac1{2\alpha}}+|y|\right)^{-(n+1)} \, dt\, dy\\
    \lesssim& \|f\|_{GY} \int_{B(0,2)}\int_{\frac12}^1 \left((1-t)^{\frac1{2\alpha}}+|y|\right)^{-(n+1)}\, dt\, dy
    .\end{aligned}
    $$
    Then it suffices to check the integrability of $\left((1-t)^{\frac1{2\alpha}}+|y|\right)^{-(n+1)}$ near $0$:
    $$\begin{aligned}
        \int_0^1\left((1-t)^{\frac1{2\alpha}}+|y|\right)^{-(n+1)}\, dt
        =& 2\alpha\int_0^1 \frac{s^{2\alpha-1}}{\left(s+|y|\right)^{n+1}}\, ds\\
        = 2\alpha |y|^{-(n+1-2\alpha)} \int_0^{|y|} \frac{s^{2\alpha-1}}{(s+1)^{n+1}}\, ds
        \leq& 2\alpha B(2\alpha, n+1-2\alpha) |y|^{-(n+1-2\alpha)} 
    ,\end{aligned}
    $$
    which is integrable near $y = 0 \in\mathbb R^n$ when $\alpha\in(\frac12, 1)$.

    Here we complete the proof of \eqref{step_1_formula_1}.
    And this completes the proof of Theorem~\ref{main_theorem}.
\end{proof}
To conclude, we prove the Corollary~\ref{corollary_main_theorem} at the end.
\begin{proof}[Proof of Corollary~\ref{corollary_main_theorem}]
    Suppose $u(x,t)$ is the unique solution constructed above for small initial data $u_0(x)\in\mathcal{B}$.
    
    For $t \leq t_0$, we have 
    $$\sup_{0 < t\leq t_0} t^{1-\frac1{2\alpha}}\|S(t)u(\cdot,t_0)\|_{L^\infty}
    \lesssim t_0^{1-\frac1{2\alpha}} \|u(\cdot,t_0)\|_{L^\infty} \leq \|u\|_{GX},$$
    where we use the integrability of the kernel of $S(t)$.
    For $t > t_0$, since 
    $$u(x,t) = S(t-t_0)u(x, t_0) - \int_{t_0}^t S(t-s) (\nabla\Pi N(u))(x,s)\, ds,$$
    which is equivalent to 
    \begin{equation}\label{integral_form_NS_t0}
        \begin{aligned}
        S(t)u(x, t_0) 
        &= S(t_0)u(x,t) + S(t_0)\int_0^t S(t-s) (\nabla\Pi Nu)(x,s)\, ds - S(t)\int_0^{t_0} S(t_0-s) (\nabla\Pi Nu)(x,s)\, ds \\
        &= S(t_0)u(x,t) + S(t_0) (V\nabla\Pi N u)(x, t) - S(t) (V\nabla\Pi N u)(x, t_0)
        .\end{aligned}
    \end{equation}
    From Theorem~\ref{main_theorem} and Lemma~\ref{lemma2}, 
    we know $u(x,t)\in GX,\quad (V\nabla\Pi N u)(x, t)\in GX$, respectively.
    Then applying \eqref{integral_form_NS_t0} and the fact that the kernel $\Phi(t_0,x)$ of $S(t_0)$ is integrable, we have 
    \begin{equation}
       \label{estimate_for_large_time}
       \sup_{t > t_0} t^{1-\frac1{2\alpha}}\|S(t)u(\cdot,t_0)\|_{L^\infty}
       \lesssim \|u\|_{GX} + \|V\nabla\Pi N u\|_{GX} + \sup_{t>t_0}t^{1-\frac1{2\alpha}}\|S(t) (V\nabla\Pi N u)(\cdot, t_0)\|_{L^\infty}    
    .\end{equation}
    Then we compute 
    $$\begin{aligned}
        &t^{1-\frac1{2\alpha}}\|S(t) (V\nabla\Pi N u)(\cdot, t_0)\|_{L^\infty}  \\
        \leq& t^{1-\frac1{2\alpha}}\|\int_0^{t_0} (\Pi\nabla S(t+t_0-s)) (u(s)\otimes u(s))\, ds\|_{L^\infty}\\
        \leq& t^{1-\frac1{2\alpha}}\left\|\int_0^{t_0}\int_{\mathbb R^n} \frac{|u(y,s)|^2}{\left((t+t_0-s)^{\frac1{2\alpha}}+|x-y|\right)^{n+1}}\, dy\, ds\right\|_{L^\infty_x}    \\   
        \leq& t^{1-\frac1{2\alpha}}\left\|\int_0^{t_0}\int_{\mathbb R^n} \frac1{s^{2-\frac1{\alpha}}\left((t+t_0-s)^{\frac1{2\alpha}}+|x-y|\right)^{n+1}}\, dy\, ds\right\|_{L^\infty_x} \|u\|_{GX}^2   \\   
        \leq& t^{1-\frac1{2\alpha}} \left(\int_0^{t_0}\int_{\mathbb R^n} \frac1{s^{2-\frac1{\alpha}}\left(t^{\frac1{2\alpha}}+|y|\right)^{n+1}}\, dy\, ds \right) \|u\|_{GX}^2   \\   
        \lesssim&_{\alpha} \frac{t^{1-\frac1{2\alpha}}}{t_0^{1-\frac1{\alpha}}} \left(\int_{\mathbb R^n} \frac1{\left(t^{\frac1{2\alpha}}+|y|\right)^{n+1}}\, dy \right) \|u\|_{GX}^2   \\   
        \lesssim&_{\alpha} \left(\frac{t_0}{t}\right)^{\frac{1-\alpha}{\alpha}} \left(\int_{\mathbb R^n} \frac1{\left(1+|y|\right)^{n+1}}\, dy \right) \|u\|_{GX}^2    
        \lesssim_{\alpha} \|u\|_{GX}^2, \\
    \end{aligned}
    $$
    where the first inequality is the similar fact to \eqref{Pi_nabla_change_position},
    the second is by Lemma~\ref{estimate_Pi_nabla_Phi}, the fifth and last inequality follow from the choice of $\alpha\in (\frac12, 1)$ and $t>t_0$.
    Then combined this result with \eqref{estimate_for_large_time}, we have 
    $$ 
    \sup_{t > t_0} t^{1-\frac1{2\alpha}}\|S(t)u(\cdot,t_0)\|_{L^\infty}
    \lesssim \|u\|_{GX} + \|V\nabla\Pi N u\|_{GX} + \|u\|_{GX}^2 
    \lesssim \|u\|_{GX} + \|u\|_{GX}^2    
    .$$
    Combining the two cases above, and we know 
    $$\sup_{t > 0} t^{1-\frac1{2\alpha}}\|S(t)u(\cdot,t_0)\|_{L^\infty} 
    \lesssim \|u\|_{GX} + \|u\|_{GX}^2 
    ,$$
    which completes the proof.
\end{proof}

\section{Further results about regularities}
Motivated by \cite{ref4}, we study the regularity about the solution 
constructed by Theorem~\ref{main_theorem}.
To state the results, we introduce some new notations.
\begin{definition}
    For any nonnegative integer $k$, we introduce the space $GX^k$ which equipped with the norm 
    $$\|u\|_{GX^k} = \sup_{\alpha_1 +\cdots+\alpha_n = k} \sup_{t>0} t^{(1-\frac1{2\alpha})+\frac{k}{2\alpha}} \|\partial_{x_1}^{\alpha_1}\cdots\partial_{x_n}^{\alpha_n}u(\cdot,t)\|_{L^\infty}.$$
    And note that $GX^0$ is the same as the definition for $GX$ above.
    For simplicity, we denote 
    $$\nabla^k u = \partial_{x_1}^{\alpha_1}\cdots\partial_{x_n}^{\alpha_n}u,\quad \alpha_1 +\cdots+\alpha_n = k .$$
    Then 
    $$\|u\|_{GX^k} = \sup_{t>0} t^{(1-\frac1{2\alpha})+\frac{k}{2\alpha}} \|\nabla^k u\|_{L^\infty}.$$
\end{definition}

\begin{theorem}\label{decay_theorem}
    For small initial data $\|u_0\|_{\mathcal{B}} < \epsilon$,
    the solution constructed in Theorem~\ref{main_theorem} satisfies 
    $$t^{(1-\frac1{2\alpha})+\frac{k}{2\alpha}} \nabla^k u \in GX^0$$ for any $k\geq 0$.
    And this is equivalent to $u\in GX^k$.
\end{theorem}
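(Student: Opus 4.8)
The plan is to run, for each fixed integer $k\ge 1$, exactly the contraction scheme that produced $u$ in Theorem~\ref{main_theorem}, but in the smaller Banach space $Z_k=\{v:\|v\|_{Z_k}:=\max_{0\le j\le k}\|v\|_{GX^j}<\infty\}$, and then to invoke uniqueness in $GX^0\supseteq Z_k$ to conclude that the solution already lies in $Z_k$. Since $\|v\|_{GX^k}\le\|v\|_{Z_k}$, this gives $u\in GX^k$, which (unravelling the definitions of $GX^0$ and $GX^k$) is precisely the decay statement in the theorem. Concretely, it suffices to establish two estimates: the linear bound $\|S(t)u_0\|_{Z_k}\lesssim_k\|u_0\|_{\mathcal B}$, and the nonlinear bound $\|V\nabla\Pi N(v)\|_{Z_k}\lesssim_k\|v\|_{Z_k}^2$ together with its bilinear (Lipschitz) version $\|V\nabla\Pi(N(v)-N(w))\|_{Z_k}\lesssim_k(\|v\|_{Z_k}+\|w\|_{Z_k})\|v-w\|_{Z_k}$. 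Given these, for $\|u_0\|_{\mathcal B}\le\epsilon$ small enough the map $\Psi(v)=S(t)u_0-V\nabla\Pi N(v)$ is a contraction on a small ball of $Z_k$; its unique fixed point there coincides with $u$ by the uniqueness part of Theorem~\ref{main_theorem}, so $u\in Z_k\subseteq GX^k$. (Note that each $k$ is self-contained once the kernel estimates below are available; no induction on $k$ is strictly needed, since $Z_k$ already carries all of $\nabla^0v,\dots,\nabla^kv$.)

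The first step is to upgrade the kernel estimates of Section~3. The same Littlewood--Paley computation that proved Lemma~\ref{estimate_Pi_nabla_Phi} shows, for every integer $m\ge1$,
$$|\Pi\nabla^m\Phi_{t^{\frac1{2\alpha}}}(x)|\lesssim_m\big(t^{\frac1{2\alpha}}+|x|\big)^{-n-m},\qquad\text{hence}\qquad\big\|\Pi\nabla^m\Phi_{t^{\frac1{2\alpha}}}\big\|_{L^1(\mathbb R^n)}\lesssim_m t^{-\frac m{2\alpha}},$$
and the argument of Lemma~\ref{integrability_fractional_temperature} gives that the kernel of $\nabla^mS(t)$ has $L^1$ norm $\lesssim_m t^{-\frac m{2\alpha}}$. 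The linear bound then follows by writing $\nabla^jS(t)u_0=\nabla^jS(t/2)\,S(t/2)u_0$, using $\|\nabla^jS(t/2)\|_{L^\infty\to L^\infty}\lesssim t^{-\frac j{2\alpha}}$ and the bound $\|S(t/2)u_0\|_{L^\infty}\lesssim t^{-(1-\frac1{2\alpha})}\|u_0\|_{\mathcal B}$ from Section~2; thus $\|S(t)u_0\|_{GX^j}\lesssim\|u_0\|_{\mathcal B}$ for all $0\le j\le k$.

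The heart is the nonlinear estimate, which I would carry out on the smooth Picard iterates (so that Leibniz and differentiation under the integral are legitimate) and then pass to the limit. Fix $t>0$ and split the Duhamel integral at $s=t/2$. On $(0,t/2)$, where $t-s\sim t$, put all $k+1$ spatial derivatives on the semigroup: using the convolution structure and the symmetry relation \eqref{Pi_nabla_change_position}, this piece equals $\int_0^{t/2}(\Pi\nabla^{k+1}\Phi_{(t-s)^{1/2\alpha}})*N(v)(s)\,ds$, whose $L^\infty$ norm is $\lesssim\int_0^{t/2}t^{-\frac{k+1}{2\alpha}}\|N(v)(s)\|_{L^\infty}\,ds\lesssim t^{-\frac{k+1}{2\alpha}}\|v\|_{GX^0}^2\int_0^{t/2}s^{-(2-\frac1\alpha)}\,ds$; since $2-\frac1\alpha<1$ for $\alpha<1$ this integral converges and produces exactly the weight $t^{-(1-\frac1{2\alpha})-\frac k{2\alpha}}$. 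On $(t/2,t)$, where $s\sim t$, instead put all $k$ derivatives on the nonlinearity: the piece is $\int_{t/2}^t(\Pi\nabla\Phi_{(t-s)^{1/2\alpha}})*\nabla^kN(v)(s)\,ds$, bounded by $\int_{t/2}^t(t-s)^{-\frac1{2\alpha}}\|\nabla^kN(v)(s)\|_{L^\infty}\,ds$; Leibniz gives $\nabla^kN(v)=\sum_{j=0}^k\binom kj(\nabla^jv)\otimes(\nabla^{k-j}v)$ and, since $v\in Z_k$ controls all $\nabla^0v,\dots,\nabla^kv$ with the right weights, $\|\nabla^kN(v)(s)\|_{L^\infty}\lesssim_k s^{-(2-\frac1\alpha)-\frac k{2\alpha}}\|v\|_{Z_k}^2$; because $\frac1{2\alpha}<1$ for $\alpha>\frac12$, the factor $\int_{t/2}^t(t-s)^{-\frac1{2\alpha}}\,ds\sim t^{1-\frac1{2\alpha}}$ is finite and again the weight $t^{-(1-\frac1{2\alpha})-\frac k{2\alpha}}$ comes out. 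Adding the two pieces and taking the supremum over $t$ gives $\|V\nabla\Pi N(v)\|_{GX^k}\lesssim_k\|v\|_{Z_k}^2$; the analogous estimates for $j<k$ are identical (for $j=0$ this is just Lemma~\ref{lemma2}), so $\|V\nabla\Pi N(v)\|_{Z_k}\lesssim_k\|v\|_{Z_k}^2$, and polarizing $N$ yields the Lipschitz bound.

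The genuinely delicate point is the time-splitting, and it is the only place where the restriction $\alpha\in(\tfrac12,1)$ is really used: one cannot distribute the $k$ derivatives in a single fixed way across the whole integral, because putting $\ge1$ derivative on the kernel makes the endpoint $s=t$ non-integrable (the exponent becomes $\ge\frac1\alpha>1$), while putting all derivatives on $N(v)$ makes the endpoint $s=0$ non-integrable for $k\ge1$ (the exponent becomes $(2-\frac1\alpha)+\frac k{2\alpha}\ge1$). Using $k+1$ derivatives on $S$ near $s=0$ and all $k$ derivatives on $N(v)$ near $s=t$ reconciles the two, and it is exactly $\alpha>\tfrac12$ (so $\frac1{2\alpha}<1$) together with $\alpha<1$ (so $2-\frac1\alpha<1$) that makes both one-sided integrals converge with the correct homogeneity. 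Once these bounds are in hand the contraction argument of Section~4 closes verbatim, identifying the fixed point in $Z_k$ with $u$ and completing the proof.
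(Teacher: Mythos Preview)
Your strategy and your estimates are correct, and your time-splitting at $t/2$ is essentially the same mechanism the paper uses (the paper splits at $t(1-\tfrac1m)$ and treats the two pieces exactly as you do). Your argument for the linear term via $\nabla^j S(t/2)\,S(t/2)u_0$ is actually a bit cleaner than the paper's Besov-space argument in Lemma~\ref{lemma1_decay}.

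There is, however, a genuine gap regarding the \emph{uniformity} of the smallness threshold. As you have written it, both the linear bound $\|S(t)u_0\|_{Z_k}\lesssim_k\|u_0\|_{\mathcal B}$ and the bilinear bound $\|V\nabla\Pi N(v)\|_{Z_k}\lesssim_k\|v\|_{Z_k}^2$ carry $k$-dependent constants, so the contraction in $Z_k$ only closes for $\|u_0\|_{\mathcal B}\le\epsilon_k$ with $\epsilon_k\to 0$. The theorem (see also the corollary immediately following it and the analyticity result in Section~6) asserts a single $\epsilon$ valid for all $k$, and this is exactly what the paper's proof arranges. The issue is not cosmetic: with your split at $t/2$, the constant in front of the top-order terms $\|v\|_{GX^0}\|v\|_{GX^k}$ in the $(t/2,t)$-piece is of size $2^{k/2\alpha}$ (it comes from replacing $s^{-(2-1/\alpha)-k/2\alpha}$ by $(t/2)^{-(2-1/\alpha)-k/2\alpha}$), so even if you separate out this term and try an induction on $k$, the contraction at level $k$ still forces $\|u\|_{GX^0}\lesssim 2^{-k/2\alpha}$.

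The paper handles this in two linked steps. First, in Lemma~\ref{lemma2_decay} the split is taken at $t(1-\tfrac1m)$ with $m=m(k)$ growing with $k$; the point is that the resulting constant on the top-order terms is $g(m)=(1-\tfrac1m)^{-(k+4\alpha-2)/2\alpha}m^{-(1-1/2\alpha)}$, which is \emph{uniformly bounded in $k$} for a suitable $m(k)$ (already $m=k$ suffices; the paper's more elaborate choice is tailored to Section~6). Second, having isolated a $k$-independent constant $C_1$ on $\|u\|_{GX^0}\|v\|_{GX^k}$, the paper runs the Picard iteration and proves convergence in $\widetilde{GX^k}$ by induction on $k$ (Lemma~\ref{lemma3_decay}): smallness of $\|u\|_{GX^0}$ alone makes the top-order term contractive, while all lower-order cross terms are bounded by the inductive hypothesis and absorbed into the $k$-dependent constants $D_k,E_k$. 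Your remark that ``no induction on $k$ is strictly needed'' is therefore incorrect for the statement as written; the induction (together with the $k$-dependent splitting point) is precisely what decouples the smallness of $\epsilon$ from $k$.
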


Similar to Lemma~\ref{estimate_Pi_nabla_Phi}, we have the general version that 
\begin{lemma}\label{estimate_Pi_nabla_k_Phi}
    Similarly, we have bounds for the kernel of $\Pi\nabla^{k+1} S(t)$ that 
    $$|\nabla^{k+1}\Pi\Phi_{t^{\frac1{2\alpha}}}(x)|\leq C \left(t^\frac1{2\alpha}+|x|\right)^{-(n+k+1)}.$$
\end{lemma}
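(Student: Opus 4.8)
The plan is to repeat the Littlewood--Paley argument proving Lemma~\ref{estimate_Pi_Phi}, now carrying $k+1$ extra derivatives through the estimates; the case $k=0$ is precisely Lemma~\ref{estimate_Pi_nabla_Phi}. First I would reduce to $t=1$: since $\Pi$ is a Fourier multiplier homogeneous of degree $0$, it commutes with differentiation and with the dilation $x\mapsto t^{-1/(2\alpha)}x$, so $\nabla^{k+1}\Pi\Phi_{t^{1/(2\alpha)}}(x)=t^{-(n+k+1)/(2\alpha)}(\nabla^{k+1}\Pi\Phi)(t^{-1/(2\alpha)}x)$; hence a bound $|\nabla^{k+1}\Pi\Phi(x)|\lesssim(1+|x|)^{-(n+k+1)}$ at $t=1$ immediately yields the claimed estimate for all $t>0$, exactly as in the passage from Lemma~\ref{estimate_Pi_Phi} to Lemma~\ref{estimate_Pi_Phi_scale}.

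For $t=1$, I would write $\nabla^{k+1}\Pi\Phi=\sum_{j\in\mathbb Z}\nabla^{k+1}m(D_x)P_j\Phi$, so that the $j$-th block has Fourier symbol (up to the harmless constant $i^{k+1}$) a monomial $\xi^\beta$ with $|\beta|=k+1$ times $m(\xi)P_j(\xi)e^{-|\xi|^{2\alpha}}$. Rescaling $\xi=2^j\eta$ turns $P_j$ into the fixed bump $P_0$, turns $m(2^j\eta)P_0(\eta)$ into the rescaled symbol $\widetilde{m_0}$ from the proof of Lemma~\ref{estimate_Pi_Phi} --- which still satisfies the Mihlin-Hormander condition \eqref{Mihlin_Hormander} with the same constant --- and produces an overall factor $2^{j(n+k+1)}$, giving
$$\nabla^{k+1}m(D_x)P_j\Phi(x)=\frac{c\,2^{j(n+k+1)}}{(2\pi)^n}\int e^{i(2^jx)\cdot\eta}\,\eta^\beta\,\widetilde{m_0}(\eta)\,e^{-4^{j\alpha}|\eta|^{2\alpha}}\,d\eta.$$
Then $N$ (even) integrations by parts against $e^{i(2^jx)\cdot\eta}$ gain the factor $(1+|2^jx|)^{-N}$, and bounding the resulting integral over $|\eta|\sim1$ by Leibniz --- using \eqref{Mihlin_Hormander} for $\widetilde{m_0}$ and elementary estimates for $\partial^\gamma(\eta^\beta e^{-c|\eta|^{2\alpha}})$ with $c=4^{j\alpha}$, exactly as in Lemmas~\ref{estimate_Pi_Phi} and \ref{integrability_fractional_temperature} --- yields $|\nabla^{k+1}m(D_x)P_j\Phi(x)|\leq C_N\,2^{j(n+k+1)}(1+|2^jx|)^{-N}(1+4^{j\alpha})^N e^{-4^{j\alpha}}$.

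Finally I would sum over $j$. For $|x|>1$ I fix $N>n+k+1$, choose $j_0\in\mathbb Z^-$ with $2^{-j_0-1}\leq|x|\leq2^{-j_0}$, and split: the low frequencies give $\sum_{j\leq j_0}2^{j(n+k+1)}\lesssim|x|^{-(n+k+1)}$ and the high frequencies give $|x|^{-N}\sum_{j>j_0}2^{j(n+k+1-N)}\lesssim|x|^{-(n+k+1)}$. For $|x|\leq1$ I use that $(1+x^N)x^{(n+k+1)/\alpha}e^{-x}\to0$, so that for $j$ beyond a threshold $J_N$ the per-block bound becomes $\leq C_N 2^{-j}$ (summable), while $\sum_{j\leq J_N}2^{j(n+k+1)}\lesssim1$; together these give a uniform bound on the unit ball. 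Combining the two regimes gives $|\nabla^{k+1}\Pi\Phi(x)|\lesssim(1+|x|)^{-(n+k+1)}$, and undoing the scaling finishes the proof. The only delicate point --- identical to the one already handled in Lemma~\ref{integrability_fractional_temperature} for the multiplier $|\xi|^N$ --- is tracking the dependence of constants on $c=4^{j\alpha}$ when differentiating $\eta^\beta e^{-c|\eta|^{2\alpha}}$, so that at high frequency $e^{-4^{j\alpha}}$ really does dominate both the polynomial factor $(1+4^{j\alpha})^N$ and the growth $2^{j(n+k+1)}$; the degree-$(k+1)$ monomial $\eta^\beta$ contributes nothing worse than the factor $|\xi|^{k+1}$ already did there, so no new idea is required.
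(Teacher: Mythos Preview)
Your proposal is correct and is precisely the argument the paper has in mind: the paper does not write out a separate proof of Lemma~\ref{estimate_Pi_nabla_k_Phi} but simply says ``Similarly,'' referring back to the Littlewood--Paley computation in Lemma~\ref{estimate_Pi_Phi} (and its variants Lemmas~\ref{estimate_Pi_Phi_scale}--\ref{integrability_fractional_temperature}), and your write-up is exactly that adaptation with the extra factor $\xi^\beta$, $|\beta|=k+1$, carried through the rescaling, integration by parts, and dyadic summation. One cosmetic point: in the $|x|\leq 1$ step you want $x^{(n+k+1)/(2\alpha)}$ rather than $x^{(n+k+1)/\alpha}$ when $x=4^{j\alpha}$ (so that this power equals $2^{j(n+k+1)}$), but since any polynomial is beaten by $e^{-x}$ this does not affect the conclusion --- the paper's own proof of Lemma~\ref{estimate_Pi_Phi} has the same harmless imprecision.
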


In fact, we need a more precise estimate for the proof of the analyticity result:
\begin{lemma}\label{precise_estimate_Pi_nabla_k_Phi}
    The kernel of $\Pi\nabla^{k+1} S(t)$ satisfies that
    $$|\nabla^{k+1}\Pi\Phi_{t^{\frac1{2\alpha}}}(x)|\leq C^k k^{\frac{k}{2\alpha}}t^{-\frac{k}{2\alpha}} \left(\left(\frac{t}{k}\right)^\frac1{2\alpha}+|x|\right)^{-(n+1)}$$
    for all $t>0, x\in\mathbb R^n$, $\nabla^{k+1}$ denote $\partial^{\beta_1}_{x_1}\cdots \partial^{\beta_n}_{x_n}\nabla$ where $\beta_1+\cdots+\beta_n = k$.
\end{lemma}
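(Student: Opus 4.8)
The plan is to obtain this precise estimate --- in which, unlike the crude bound of Lemma~\ref{estimate_Pi_nabla_k_Phi}, the dependence of the constant on $k$ must be tracked --- by splitting the semigroup into $k+1$ equal factors and placing exactly one derivative on each. Since $S(t)=S\big(\tfrac{t}{k+1}\big)^{\,k+1}$, the multi-index $\nabla^{k+1}=\partial_{x_1}^{\beta_1}\cdots\partial_{x_n}^{\beta_n}\nabla$ has total order $k+1$, and $\Pi$ is a Fourier multiplier (so it commutes with $S(\tau)$ and with each $\partial_{x_j}$, and may be attached to a single factor), I would write, with $\tau:=\tfrac{t}{k+1}$,
\[
  \Pi\nabla^{k+1}S(t)=\big(\Pi\,\partial_{x_{j_0}}S(\tau)\big)\circ\big(\partial_{x_{j_1}}S(\tau)\big)\circ\cdots\circ\big(\partial_{x_{j_k}}S(\tau)\big),
\]
where $j_0,\dots,j_k$ are the $k+1$ coordinate directions determined by $\beta_1,\dots,\beta_n$ and the final $\nabla$. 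The kernel of the left-hand side is then the $(k+1)$-fold convolution of the kernels of the factors on the right.

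Next I would bound each factor at the common scale $a:=\tau^{1/(2\alpha)}$. By Lemma~\ref{estimate_Pi_nabla_Phi} and scaling the kernel of $\Pi\,\partial_{x_{j_0}}S(\tau)$ is $\le C(a+|x|)^{-(n+1)}$; and the plain kernel $\partial_{x_{j_i}}\Phi_{a}$ of $\partial_{x_{j_i}}S(\tau)$ satisfies the \emph{same} bound $|\partial_{x_{j_i}}\Phi_{a}(x)|\le C(a+|x|)^{-(n+1)}$, which reduces by scaling to $|\nabla\Phi(x)|\lesssim(1+|x|)^{-(n+1)}$ --- proved exactly by the Littlewood--Paley/integration-by-parts argument of Lemmas~\ref{estimate_Pi_Phi} and \ref{integrability_fractional_temperature} with the multiplier removed. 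Thus all $k+1$ kernels are dominated by $C(a+|\cdot|)^{-(n+1)}$ with one and the same scale $a$.

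The heart of the argument is a scale-uniform bilinear convolution estimate: there is $C_\ast=C_\ast(n)$ such that $|f|,|g|\le(a+|\cdot|)^{-(n+1)}$ implies $|f\ast g|(x)\le C_\ast\,a^{-1}(a+|x|)^{-(n+1)}$ for every $a>0$. I would prove it by splitting $\mathbb R^n$ into $\{|y|\le|x|/2\}$, $\{|x-y|\le|x|/2\}$ and the remainder; on each region one factor is frozen at $(a+|x|)^{-(n+1)}$ while the other integrates to $\lesssim a^{-1}$, the decisive point being that $n+1>n$, so no logarithm appears even near $|x|\sim a$ and $C_\ast$ is independent of $a$. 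Iterating this $k$ times across the $(k+1)$-fold convolution gives $|\nabla^{k+1}\Pi\Phi_{t^{1/(2\alpha)}}(x)|\le C^{k}a^{-k}(a+|x|)^{-(n+1)}$, and substituting $a=\big(\tfrac{t}{k+1}\big)^{1/(2\alpha)}$ leaves only the bookkeeping $1\le\tfrac{k+1}{k}\le2$ (for $k\ge1$): this turns $(k+1)^{k/(2\alpha)}$ into $2^{k/(2\alpha)}k^{k/(2\alpha)}$ and replaces the scale $\big(\tfrac{t}{k+1}\big)^{1/(2\alpha)}$ by $\big(\tfrac{t}{k}\big)^{1/(2\alpha)}$ at the cost of a $k$-independent factor, yielding the stated bound (the case $k=0$ being precisely Lemma~\ref{estimate_Pi_nabla_Phi}).

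I expect the main obstacle to be exactly this scale-uniform convolution step: one must confirm that the bilinear estimate holds with a constant independent of $a$ and that it reproduces the spatial profile $(a+|\cdot|)^{-(n+1)}$ without degrading it to $(a+|\cdot|)^{-n}$ or introducing a logarithmic loss, so that $k$ iterations accumulate only $C_\ast^{\,k}$ and the total constant stays of the form $C^{k}$. The attendant technical point is that the plain derivative kernels $\partial_{x_j}\Phi_{s^{1/(2\alpha)}}$ must be known to possess genuine $(n+1)$-order pointwise decay, since an $L^1$ bound on them alone would not suffice for the convolution to preserve the profile.
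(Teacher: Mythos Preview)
Your proposal is correct and follows essentially the same strategy as the paper: split the semigroup into equal-time factors, place one derivative on each, bound every factor's kernel by $C(a+|\cdot|)^{-(n+1)}$ at the common scale $a$, and iterate the bilinear convolution estimate (which is exactly the paper's Lemma~\ref{lemma_cited}) to collect the $C^k a^{-k}$ prefactor. The only cosmetic difference is that the paper first rescales to $t=1$ and then splits $S(1)=S(\tfrac12)\circ S(\tfrac12)$, handling $\Pi\nabla$ on the first half and further subdividing the second half into $k$ pieces $S(\tfrac1{2k})$, whereas you split $S(t)$ directly into $k+1$ equal factors; the arithmetic and the key convolution lemma are identical.
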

Before stating the proof, we need the following fact cited from \cite{ref4}.
\begin{lemma}\label{lemma_cited}
    There exists some constant $C$ such that 
    $$\int_{\mathbb R^n} (a+|x-y|)^{-n-1}(b+|y|)^{-n-1}\, dy \leq Ca^{-1}(a+|x|)^{-n-1}$$
    for all $x\in\mathbb R^n$ and $0<a<b$.
\end{lemma}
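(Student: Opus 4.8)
The plan is to reduce the claim to a scale-invariant convolution inequality and then dispatch that by an elementary region decomposition. \textbf{First}, since $0<a<b$ we have $(b+|y|)^{-n-1}\le (a+|y|)^{-n-1}$ pointwise in $y$, so it is enough to prove
$$\int_{\mathbb R^n}(a+|x-y|)^{-n-1}(a+|y|)^{-n-1}\,dy \le C\,a^{-1}(a+|x|)^{-n-1}.$$
\textbf{Second}, I would substitute $y=az$ and set $w=x/a$; the left-hand side becomes $a^{-n-2}\int_{\mathbb R^n}(1+|w-z|)^{-n-1}(1+|z|)^{-n-1}\,dz$, while the right-hand side becomes $C\,a^{-n-2}(1+|w|)^{-n-1}$, so after cancelling the common factor $a^{-n-2}$ the inequality is equivalent to its $a=1$ case, namely
$$\int_{\mathbb R^n}(1+|x-y|)^{-n-1}(1+|y|)^{-n-1}\,dy \le C(1+|x|)^{-n-1}\qquad\text{for all }x\in\mathbb R^n.$$

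To establish this last inequality I would split $\mathbb R^n$ into $\{\,|y|\ge |x|/2\,\}$ and $\{\,|y|<|x|/2\,\}$. On the first region $(1+|y|)^{-n-1}\le(1+|x|/2)^{-n-1}\le 2^{n+1}(1+|x|)^{-n-1}$, and since $n+1>n$ the function $(1+|\cdot|)^{-n-1}$ lies in $L^1(\mathbb R^n)$, so integrating the remaining factor $(1+|x-y|)^{-n-1}$ over all of $\mathbb R^n$ contributes only a fixed constant; this piece is therefore $\lesssim (1+|x|)^{-n-1}$. On the second region the triangle inequality gives $|x-y|\ge|x|-|y|>|x|/2$, hence $(1+|x-y|)^{-n-1}\lesssim(1+|x|)^{-n-1}$, while the integral of $(1+|y|)^{-n-1}$ over $\{\,|y|<|x|/2\,\}\subset\mathbb R^n$ is again bounded by the same $L^1$ constant; this piece is likewise $\lesssim(1+|x|)^{-n-1}$. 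Summing the two contributions yields the claim.

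There is essentially no obstacle here: the content is the classical fact that $(1+|\cdot|)^{-n-1}$ is stable, up to a constant, under convolution with itself, and the two hypotheses enter only in trivial ways — the ordering $a<b$ is used once to delete the $b$, and the exponent $n+1$ is chosen precisely so that $(1+|\cdot|)^{-n-1}\in L^1(\mathbb R^n)$. The only point to watch is the bookkeeping of the powers of $a$ in the scaling step, which must reproduce exactly the factor $a^{-1}$ on the right-hand side. This is the estimate quoted as \cite{ref4}, and the argument above is the standard proof.
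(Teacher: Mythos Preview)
Your argument is correct: the monotonicity in $b$ reduces to $b=a$, the rescaling $y\mapsto ay$ is carried out with the right power of $a$, and the two-region split $\{|y|\ge|x|/2\}\cup\{|y|<|x|/2\}$ together with the integrability of $(1+|\cdot|)^{-n-1}$ finishes the job. The paper does not actually prove this lemma; it simply imports it from \cite{ref4} as a black box, so your write-up supplies what the paper omits.
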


\begin{proof}[Proof of Lemma~\ref{precise_estimate_Pi_nabla_k_Phi}]
    By scaling property, it suffices to prove 
    $$|\nabla^{k+1}\Pi\Phi(x)|\leq C^k k^{\frac{k}{2\alpha}} \left(k^{-\frac1{2\alpha}}+|x|\right)^{-(n+1)}.$$
    Since 
    $$\nabla^{k+1}\Pi\Phi(x) = \nabla\Pi\Phi(\frac12, \cdot) * \nabla^{k}\Phi(\frac12, \cdot),$$
    where 
    \[
    |\nabla\Pi\Phi(\frac12, x)|\leq C(1+|x|)^{-n-1}.
    \]
    Then by Lemma~\ref{lemma_cited}, it suffices to prove 
    $$|\nabla^{k}\Phi(\frac12, x)|\leq C^{k-1} k^{\frac{k-1}{2\alpha}} \left(k^{-\frac1{2\alpha}}+|x|\right)^{-(n+1)}.$$
    From Lemma~\ref{integrability_fractional_temperature}, then 
    $|\nabla\Phi(\frac12, x)|\leq C(1+|x|)^{-n-1}$
    and then scaling implies
    \begin{equation}\label{induction_equation}
        |\nabla\Phi(\frac1{2k}, x)|\leq Ck^{\frac{n+1}{2\alpha}}(1+|k^{\frac1{2\alpha}}x|)^{-n-1} = C(k^{-\frac1{2\alpha}}+|x|)^{-n-1}
    .\end{equation}
    For fixed $k > 0$, we write
    $$\nabla^{l+1}\Phi(\frac1{2k}, x) = \nabla \Phi(\frac1{2k},\cdot) * \nabla^l \Phi(\frac1{2k},\cdot),$$
    then we could use induction on $l \geq 0$ by applying Lemma~\ref{lemma_cited} to get the estimate 
    $$
    |\nabla^{l+1}\Phi(\frac1{2k}, x)| \leq C^l k^{\frac{l}{2\alpha}}(k^{-\frac1{2\alpha}}+|x|)^{-n-1}
    ,$$
    where \eqref{induction_equation} is our induction hypothesis.
    Let $l = k$, then we get our desired result.
\end{proof}

And from Lemma~\ref{integrability_fractional_temperature}, we know that 
the kernel of $\nabla S(t)$ is integrable, thus we have the trivial estimate that 
\begin{equation}\label{estimate_nabla_fractional_temperature}
    \|\nabla S(t)u\|_{L^\infty}\leq \frac{C}{t^{\frac1{2\alpha}}}\|u\|_{L^\infty}.
\end{equation}

\smallskip
Now we state two lemmas, which are the constituents of the proof of Theorem~\ref{decay_theorem}.
We first manipulate the linear terms.
\begin{lemma}\label{lemma1_decay}
    For $k\geq 0$, we have 
    $$\|S(t)u_0\|_{GX^k} \leq C_k \|u_0\|_{\mathcal{B}}.$$
\end{lemma}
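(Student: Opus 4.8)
The plan is to exploit the semigroup property $S(t)=S(t/2)\,S(t/2)$ together with two facts already established: the smoothing estimate \eqref{estimate_nabla_fractional_temperature} (and its iterates), which controls $\nabla^k S(\tau)$ on $L^\infty$, and the decay bound $\|S(\tau)u_0\|_{L^\infty}\lesssim \tau^{-(1-\frac1{2\alpha})}\|u_0\|_{\mathcal B}$, which follows from the first lemma of Section~2 combined with the norm equivalences in Theorem~\ref{equivalence_of_norms}. The whole argument is a one-line chaining of these once the $k$-th order smoothing bound is in place.

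First I would record the $L^\infty\to L^\infty$ smoothing estimate $\|\nabla^k S(\tau)g\|_{L^\infty}\leq C_k\,\tau^{-\frac{k}{2\alpha}}\|g\|_{L^\infty}$ for all $\tau>0$ and all multiindices of length $k$. This is immediate from scaling: the Fourier multiplier of $\partial^{\gamma}S(\tau)$, $|\gamma|=k$, is $(i\xi)^{\gamma}e^{-\tau|\xi|^{2\alpha}}$, whose kernel is $\tau^{-(n+k)/(2\alpha)}\Psi_\gamma(\tau^{-1/(2\alpha)}x)$ with $\Psi_\gamma$ independent of $\tau$; integrability of $\Psi_\gamma$ is exactly the content of the computation in Lemma~\ref{integrability_fractional_temperature} with $N=k$ (alternatively, one writes $S(\tau)$ as the $k$-fold product $S(\tau/k)\cdots S(\tau/k)$ and applies \eqref{estimate_nabla_fractional_temperature} $k$ times), and then $\|\Psi_\gamma(\tau^{-1/(2\alpha)}\cdot)\tau^{-(n+k)/(2\alpha)}\|_{L^1}=\tau^{-k/(2\alpha)}\|\Psi_\gamma\|_{L^1}$.

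Next I would write $\nabla^k S(t)u_0=\nabla^k S(t/2)\bigl(S(t/2)u_0\bigr)$ and chain the two estimates:
$$\|\nabla^k S(t)u_0\|_{L^\infty}\leq C_k\,(t/2)^{-\frac{k}{2\alpha}}\,\|S(t/2)u_0\|_{L^\infty}\leq C_k\,(t/2)^{-\frac{k}{2\alpha}}\cdot C\,(t/2)^{-(1-\frac1{2\alpha})}\|u_0\|_{\mathcal B}.$$
Multiplying by $t^{(1-\frac1{2\alpha})+\frac{k}{2\alpha}}$ and absorbing the harmless power of $2$ into the constant gives $t^{(1-\frac1{2\alpha})+\frac{k}{2\alpha}}\|\nabla^k S(t)u_0\|_{L^\infty}\leq C_k\|u_0\|_{\mathcal B}$ uniformly in $t>0$; taking the supremum over $t$ and over multiindices of length $k$ yields $\|S(t)u_0\|_{GX^k}\leq C_k\|u_0\|_{\mathcal B}$.

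There is no real obstacle here; the only point worth any care is the dependence of $C_k$ on $k$, which is relevant for the later analyticity result. Splitting $S(t)$ into $k+1$ equal pieces $S(\tfrac{t}{k+1})$, applying one derivative to each of the first $k$ factors via \eqref{estimate_nabla_fractional_temperature} and the decay bound to the last, produces $C_k\lesssim C^{k+1}(k+1)^{\frac{k}{2\alpha}+1-\frac1{2\alpha}}$, of the same factorial type as in Lemma~\ref{precise_estimate_Pi_nabla_k_Phi}; for the present lemma, however, any finite $C_k$ suffices.
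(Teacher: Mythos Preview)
Your proof is correct, but the route differs from the paper's. The paper identifies the quantity $\sup_{t>0} t^{(1-\frac1{2\alpha})+\frac{k}{2\alpha}}\|S(t)\nabla^k u_0\|_{L^\infty}$ directly as the Besov norm $\|\nabla^k u_0\|_{\dot B^{-k-(2\alpha-1),\infty}_\infty}$ via Lemma~\ref{Besov_norm_equivalence} (with $\sigma=-k-(2\alpha-1)$), and then invokes the standard mapping property that $\nabla$ is bounded from $\dot B^{-l,\infty}_\infty$ to $\dot B^{-l-1,\infty}_\infty$, applied $k$ times to reduce to $\|u_0\|_{\dot B^{1-2\alpha,\infty}_\infty}=\|u_0\|_{\mathcal B}$. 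Your argument instead stays entirely on the kernel side: split $S(t)=S(t/2)S(t/2)$, put all derivatives on the first factor via the $L^\infty\!\to\! L^\infty$ smoothing bound from Lemma~\ref{integrability_fractional_temperature}, and use Lemma~2.1/Theorem~\ref{equivalence_of_norms} on the second. Your approach is more self-contained (no appeal to Besov mapping properties beyond what the paper already proves for $\sigma=1-2\alpha$) and, as you note in your last paragraph, makes the growth of $C_k$ in $k$ transparent; the paper's version is shorter once the Besov machinery is in place but hides that constant inside the $\nabla:\dot B^{-l,\infty}_\infty\to\dot B^{-l-1,\infty}_\infty$ step.
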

\begin{proof}
    By the equivalence of norms that we have proved previously in the precise version \eqref{precise_version_of_lemma} of Lemma~\ref{Besov_norm_equivalence},
    we know that 
    $$t^{(1-\frac1{2\alpha})+\frac{k}{2\alpha}} \|S(t)\nabla^k u_0\|_{L^\infty}
    = \|\nabla^k u_0\|_{\dot B^{-k-(2\alpha-1),\infty}_\infty}.$$
    Then this lemma follows from the fact that 
    $\nabla$ is bounded from $\dot B^{-l,\infty}_\infty$ to $\dot B^{-l-1,\infty}_\infty$
    for $l\geq 0$.
\end{proof}
Then for the nonlinear terms, we have 
\begin{lemma}\label{lemma2_decay}
    For $k\geq 1$, we have
    $$\|B(u,v)\|_{GX^k}
    \leq C_{0}(k)\|u\|_{GX^0}\|v\|_{GX^0} + C_1\|u\|_{GX^0}\|v\|_{GX^k} + C_1\|u\|_{GX^k}\|v\|_{GX^0} 
    + C_2\sum_{l=1}^{k-1} \left(\begin{matrix}k \\ l\end{matrix}\right)\|u\|_{GX^l} \|v\|_{GX^{k-l}}$$
    where $$B(u,v)(x,t) = V\Pi\nabla(u\otimes v) = \int_0^t S(t-s)\Pi\nabla(u(\cdot,s)\otimes v(\cdot,s)\, ds.$$
    And for $k = 0$, we have proved that $\|B(u,v)\|_{GX^0} \leq C\|u\|_{GX^0}\|v\|_{GX^0}$.
\end{lemma}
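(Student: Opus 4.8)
The plan is to bound, for each fixed $t>0$, the quantity $t^{(1-\frac1{2\alpha})+\frac{k}{2\alpha}}\|\nabla^k B(u,v)(\cdot,t)\|_{L^\infty}$ uniformly in $t$; the $k=0$ case is already known, so assume $k\ge 1$. Writing
$$\nabla^k B(u,v)(x,t)=\int_0^t \big(\nabla^k_x S(t-s)\Pi\nabla\big)(u(\cdot,s)\otimes v(\cdot,s))(x)\,ds$$
and recalling that $S(t-s)$, $\Pi$, $\nabla$ and $\nabla^k$ are commuting Fourier multipliers, so that for a convolution the derivatives may be freely moved from the kernel onto $u\otimes v$ or back (cf. \eqref{Pi_nabla_change_position}), I would split $\int_0^t=\int_0^{t/2}+\int_{t/2}^t$ and place the derivatives on different factors in the two regimes.

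On $[0,t/2]$, where $t-s\sim t$, move all $k+1$ derivatives onto the kernel. By Lemma~\ref{estimate_Pi_nabla_k_Phi} the convolution kernel of $\nabla^{k+1}\Pi S(t-s)$ is $\lesssim\big((t-s)^{\frac1{2\alpha}}+|\cdot|\big)^{-(n+k+1)}$, hence has $L^1$-norm $\lesssim(t-s)^{-\frac{k+1}{2\alpha}}$, while $u\otimes v$ is handled by the crude bound $\|u(s)\|_{L^\infty}\|v(s)\|_{L^\infty}\le s^{-2(1-\frac1{2\alpha})}\|u\|_{GX^0}\|v\|_{GX^0}$. The time integral $\int_0^{t/2}(t-s)^{-\frac{k+1}{2\alpha}}s^{-2(1-\frac1{2\alpha})}\,ds$ converges at $s=0$ precisely because $2(1-\frac1{2\alpha})<1$, i.e. $\alpha<1$, and a short computation shows the powers of $t$ combine to the exact weight $t^{-(1-\frac1{2\alpha})-\frac{k}{2\alpha}}$. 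This produces the term $C_0(k)\|u\|_{GX^0}\|v\|_{GX^0}$; the constant inherits the $k$-dependence of the kernel bound (one may use the sharper Lemma~\ref{precise_estimate_Pi_nabla_k_Phi} if explicit $k$-growth is wanted), which is why only this term carries a $k$-dependent constant.

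On $[t/2,t]$, where $s\sim t$, instead move only the single $\nabla$ from the definition of $B$ onto the kernel and distribute $\nabla^k$ onto $u\otimes v$ by the Leibniz rule, $\nabla^k(u\otimes v)=\sum_{l=0}^{k}\binom{k}{l}\nabla^l u\otimes\nabla^{k-l}v$. The kernel of $S(t-s)\Pi\nabla$ is $\lesssim\big((t-s)^{\frac1{2\alpha}}+|\cdot|\big)^{-(n+1)}$ by Lemma~\ref{estimate_Pi_nabla_Phi}, with $L^1$-norm $\lesssim(t-s)^{-\frac1{2\alpha}}$, and each factor obeys $\|\nabla^l u(s)\|_{L^\infty}\le s^{-(1-\frac1{2\alpha})-\frac{l}{2\alpha}}\|u\|_{GX^l}$ and the analogue for $v$. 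Since $s\sim t$ on this interval, the $s$-powers come out as $t^{-2(1-\frac1{2\alpha})-\frac{k}{2\alpha}}$ while $\int_{t/2}^t(t-s)^{-\frac1{2\alpha}}\,ds\sim t^{1-\frac1{2\alpha}}$ converges at $s=t$ precisely because $\frac1{2\alpha}<1$, i.e. $\alpha>\frac12$; again the weights combine to $t^{-(1-\frac1{2\alpha})-\frac{k}{2\alpha}}$. Separating $l=0$ and $l=k$, where $\binom{k}{l}=1$, yields the $C_1\|u\|_{GX^0}\|v\|_{GX^k}$ and $C_1\|u\|_{GX^k}\|v\|_{GX^0}$ terms with $k$-independent constants, and $1\le l\le k-1$ yields $C_2\sum_{l=1}^{k-1}\binom{k}{l}\|u\|_{GX^l}\|v\|_{GX^{k-l}}$. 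Adding the two regimes gives the claim.

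The only genuinely delicate point is the bookkeeping at the endpoints of the time integral: one is forced to use the ``derivatives on the kernel'' regime near $s=0$ and the ``derivatives on $u\otimes v$'' regime near $s=t$, since under the reverse assignments the time singularity fails to be integrable — and these are exactly the two constraints that pin down $\alpha\in(\frac12,1)$. Verifying that all powers of $t$ cancel (the scale invariance built into the definition of $GX^k$, cf. \eqref{scaling_L_2_average}) and that the $k$-dependent constant is confined to $C_0(k)$ is then routine.
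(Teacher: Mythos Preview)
Your split at $t/2$ and the overall strategy are correct, and the first regime $[0,t/2]$ is handled cleanly (in fact more simply than the paper, which uses a spatial dyadic sum and the $\mathcal E$-type average there). The gap is in your final sentence: the claim that the $k$-dependence is confined to $C_0(k)$ is \emph{not} routine with a split at $t/2$. On $[t/2,t]$ you bound $s^{-\frac{k+4\alpha-2}{2\alpha}}$ by $(t/2)^{-\frac{k+4\alpha-2}{2\alpha}}$, so after multiplying by $t^{(1-\frac1{2\alpha})+\frac{k}{2\alpha}}$ the constant in front of every Leibniz term is $\sim 2^{\frac{k+4\alpha-2}{2\alpha}}$, which grows exponentially in $k$. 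Thus your $C_1$ and $C_2$ are not $k$-independent.

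The paper's device for this is to split instead at $t(1-\tfrac1m)$ with $m=m(k)$ chosen so that
\[
g(m)=(1-\tfrac1m)^{-\frac{k+4\alpha-2}{2\alpha}}\,m^{-(1-\frac1{2\alpha})}
\]
remains bounded as $k\to\infty$; the second factor $m^{-(1-\frac1{2\alpha})}$ (available because $\alpha>\tfrac12$) beats the growth of the first if $m$ is taken roughly like a power of $k$. This is not cosmetic: in the analyticity argument of Section~6 the term $2C_1\|u\|_{GX^0}\|u\|_{GX^k}$ must be absorbed into the left-hand side using only smallness of $\|u\|_{GX^0}$, uniformly in $k$, so a $k$-dependent $C_1$ would break the induction. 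If you only care about Theorem~\ref{decay_theorem} (fixed $k$), your argument suffices; for the lemma as stated, you need the $m(k)$-dependent split.
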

\begin{proof}
    Fix some $m = m(k)$ which will be determined later.
    If $0< s < t(1-\frac1{m})$, we use Lemma~\ref{precise_estimate_Pi_nabla_k_Phi} to obtain 
    \begin{equation}\label{lemma2_decay_estimate_1}
        \begin{aligned}
            &\int_0^{t(1-\frac1{m})} |\nabla^k S(t-s)\Pi\nabla u(s)\otimes v(s)|\, ds\\
        \lesssim& \int_0^{t(1-\frac1{m})} C^k k^{\frac{k}{2\alpha}}\int_{\mathbb R^n} \frac{|u(y,s)| |v(y,s)|}{(t-s)^{\frac{k}{2\alpha}}\left((\frac{t-s}{k})^{\frac1{2\alpha}}+|x-y|\right)^{n+1}}\, dy\, ds\\
        \lesssim& \int_0^{t(1-\frac1{m})} C^k k^{\frac{k}{2\alpha}}\int_{\mathbb R^n} \frac{|u(y,s)| |v(y,s)|}{(t-s)^{\frac{k+n+1}{2\alpha}}\left((\frac1{k})^{\frac1{2\alpha}}+\frac{|x-y|}{(t-s)^{\frac1{2\alpha}}}\right)^{n+1}}\, dy\, ds\\
        \lesssim& C^k k^{\frac{k}{2\alpha}}(\frac{m}{t})^{\frac{k+n+1}{2\alpha}} \int_0^{t(1-\frac1{m})} \int_{\mathbb R^n} \frac{|u(y,s)| |v(y,s)|}{\left(\frac1{k^{\frac1{2\alpha}}}+\frac{|x-y|}{t^{\frac1{2\alpha}}}\right)^{n+1}}\, dy\, ds\\
        \lesssim& C^k k^{\frac{k}{2\alpha}}(\frac{m}{t})^{\frac{k+n+1}{2\alpha}} \int_0^{t(1-\frac1{m})} \sum_{q\in\mathbb Z^d} \frac1{(\frac1{k^{\frac1{2\alpha}}}+|q|)^{n+1}}\int_{y\in x+B(t^{\frac1{2\alpha}}q, t^{\frac1{2\alpha}})} |u(y,s)| |v(y,s)|\, dy\, ds\\
        \lesssim& C^k k^{\frac{k+1}{2\alpha}}m^{\frac{k+n+1}{2\alpha}} \frac1{t^{\frac{k+2\alpha-1}{2\alpha}}} \frac1{t^{\frac{n+2-2\alpha}{2\alpha}}} \sup_{q\in\mathbb Z^n}\int_0^{t(1-\frac1{m})}\int_{y\in x+B(t^{\frac1{2\alpha}}q, t^{\frac1{2\alpha}})} |u(y,s)| |v(y,s)|\, dy\, ds\\
        \lesssim& C^k k^{\frac{k+1}{2\alpha}}m^{\frac{k+n+1}{2\alpha}}\frac1{t^{\frac{k+2\alpha-1}{2\alpha}}} \|u\|_{GX^0} \|v\|_{GX^0}
        .\end{aligned}
    \end{equation}
    
    If $t(1-\frac1{m}) \leq s < t$, we use \eqref{estimate_nabla_fractional_temperature} to obtain 
    \begin{equation}
        \begin{aligned}
            |\nabla^kS(t-s)\Pi\nabla(u\otimes v)|
            \lesssim& \frac1{(t-s)^{\frac1{2\alpha}}} \sum_{l=0}^k \left(\begin{matrix}k \\ l\end{matrix}\right) \|\nabla^l u(\cdot, s)\|_{L^\infty} \|\nabla^{k-l} v(\cdot, s)\|_{L^\infty}  \\
            \lesssim& \frac1{(t-s)^{\frac1{2\alpha}}} \sum_{l=0}^k s^{-\frac{k+4\alpha-2}{2\alpha}}\left(\begin{matrix}k \\ l\end{matrix}\right) \|u\|_{GX^l} \|v\|_{GX^{k-l}}
        .\end{aligned}
    \end{equation}
    Therefore, 
    \begin{equation}\label{lemma2_decay_estimate_2}
        \left|\int_{t(1-\frac1m)}^t \nabla^kS(t-s)\Pi\nabla(u(s)\otimes v(s))\, ds\right|
    \lesssim \int_{t(1-\frac1m)}^t \frac1{(t-s)^{\frac1{2\alpha}}}s^{-\frac{k+4\alpha-2}{2\alpha}}\, ds \sum_{l=0}^k \left(\begin{matrix}k \\ l\end{matrix}\right)\|u\|_{GX^l} \|v\|_{GX^{k-l}}
    .\end{equation}
    Let 
    $$I(k,m,t) = \int_{t(1-\frac1m)}^t \frac1{(t-s)^{\frac1{2\alpha}}}s^{-\frac{k+4\alpha-2}{2\alpha}}\, ds,$$
    then 
    \begin{equation}\label{lemma2_decay_estimate_3}
        \begin{aligned}
            I(k,m,t) =\frac1{t^{\frac{k+2\alpha-1}{2\alpha}}} \int_{1-\frac1m}^1 \frac1{(1-z)^{\frac1{2\alpha}}} \frac1{z^{\frac{k+4\alpha-2}{2\alpha}}}\, dz 
            \leq t^{-\frac{k+2\alpha-1}{2\alpha}} (1-\frac1m)^{-\frac{k+4\alpha-2}{2\alpha}} \int_{1-\frac1{m}}^1 (1-z)^{-\frac1{2\alpha}}\, dz &\\
            = g(m)t^{-\frac{k+2\alpha-1}{2\alpha}},&
        \end{aligned}
    \end{equation}
    where $g(m) = (1-\frac1m)^{-\frac{k+4\alpha-2}{2\alpha}} \frac1{m^{1-\frac1{2\alpha}}}$.
    Take $m = m(k) = k^{\frac{(2\alpha-1)k-(2\alpha+1)}{n+k+1}}$,
    then $g(m) \to 0$ as $k\to \infty$. 
    Thus $|g(m)|\leq C$ are uniformly bounded.
    Therefore, Lemma~\ref{lemma2_decay} follows from \eqref{lemma2_decay_estimate_1}, \eqref{lemma2_decay_estimate_2}, \eqref{lemma2_decay_estimate_3}.
\end{proof}

Let $\widetilde{GX^k} = \bigcap_{l=0}^k GX^l$ equipped with the norm $\|\cdot\|_{\widetilde{GX^k}} = \sum_{l=0}^k\|\cdot\|_{GX^l}$.
Then Lemma~\ref{lemma2_decay} implies that 
$$\|B(u,v)\|_{GX^k}
\leq C_{0}(k)\|u\|_{GX^0}\|v\|_{GX^0} + C_1\|u\|_{GX^0}\|v\|_{GX^k} + C_1\|u\|_{GX^k}\|v\|_{GX^0} 
+ C(k) \|u\|_{\widetilde{GX^{k-1}}} \|v\|_{\widetilde{GX^{k-1}}}$$
for any $k\geq 1$.
And we already know $\|B(u,v)\|_{GX^0}\leq C\|u\|_{GX^0} \|v\|_{GX^0}$.
    
Now we define an approximating sequence
$$v^{-1} = 0,\quad v^0 = S(t)u_0,$$
$$v^{j+1} = v^0 + B(v^j, v^j),$$
then for any $k\geq 0$, $v_j$ converges in $\widetilde{GX^k}$ provided that $v^0$ is small enough in $\widetilde{GX^k}$.
Moreover, we need the following lemma to conclude the proof of Theorem~\ref{decay_theorem}.

\begin{lemma}\label{lemma3_decay}
    Let $u_0$ be small enough in $\mathcal{B}$, then 
    for $\forall k \geq 0$, there exist constants $D_k, E_k$ such that 
    $$\|v^j\|_{\widetilde{GX^k}} \leq D_k,$$
    $$\|v^{j+1}-v^j\|_{\widetilde{GX^k}}\leq E_k (\frac23)^j.$$
    In particular, for any $k\geq 0$, $v^j$ converges in $\widetilde{GX^k}$.
\end{lemma}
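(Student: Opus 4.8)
The plan is to induct on $k$, establishing both estimates simultaneously, the crucial point being that the smallness required of $u_0$ should be a single condition independent of $k$. I would set $\delta := \|v^0\|_{GX^0} = \|S(t)u_0\|_{GX^0}$, which by Lemma~\ref{lemma1_decay} is $\lesssim \|u_0\|_{\mathcal B}$ and hence as small as desired; write $C$ for the constant in $\|B(u,v)\|_{GX^0}\le C\|u\|_{GX^0}\|v\|_{GX^0}$ and $C_1$ for the $k$-independent constant multiplying the top-order cross terms $\|u\|_{GX^0}\|v\|_{GX^k}$ and $\|u\|_{GX^k}\|v\|_{GX^0}$ in Lemma~\ref{lemma2_decay}; and require $4C\delta\le\tfrac12$ and $4C_1\delta\le\tfrac12$. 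Neither inequality involves $k$, so this is the uniform smallness hypothesis. For $k=0$ the statement is just the Picard iteration behind Theorem~\ref{main_theorem}: inductively in $j$, $\|v^j\|_{GX^0}\le 2\delta$ forces $\|v^{j+1}\|_{GX^0}\le\delta+C(2\delta)^2\le 2\delta$ (take $D_0=2\delta$), while the bilinearity identity $v^{j+1}-v^j=B(v^j-v^{j-1},v^j)+B(v^{j-1},v^j-v^{j-1})$ gives $\|v^{j+1}-v^j\|_{GX^0}\le 4C\delta\|v^j-v^{j-1}\|_{GX^0}\le\tfrac12\|v^j-v^{j-1}\|_{GX^0}$, so $E_0:=\|B(v^0,v^0)\|_{GX^0}$ works (using $(\tfrac12)^j\le(\tfrac23)^j$).

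For the inductive step, assume the claim holds at level $k-1$ with constants $D_{k-1},E_{k-1}$. First I would bound $\|v^j\|_{GX^k}$: by Lemma~\ref{lemma1_decay}, the $GX^k$-estimate of Lemma~\ref{lemma2_decay}, and the previously obtained bounds $\|v^j\|_{GX^0}\le D_0$ and $\|v^j\|_{\widetilde{GX^{k-1}}}\le D_{k-1}$,
\[
\|v^{j+1}\|_{GX^k}\le C_k\|u_0\|_{\mathcal B}+C_0(k)D_0^2+C(k)D_{k-1}^2+2C_1 D_0\,\|v^j\|_{GX^k},
\]
and since $2C_1 D_0=4C_1\delta\le\tfrac12$ while $\|v^0\|_{GX^k}\le C_k\|u_0\|_{\mathcal B}$, induction in $j$ yields $\|v^j\|_{GX^k}\le M_k:=2\bigl(C_k\|u_0\|_{\mathcal B}+C_0(k)D_0^2+C(k)D_{k-1}^2\bigr)$; hence $\|v^j\|_{\widetilde{GX^k}}\le D_k:=D_{k-1}+M_k$. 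For the differences I would again expand $v^{j+1}-v^j=B(v^j-v^{j-1},v^j)+B(v^{j-1},v^j-v^{j-1})$, apply the $GX^k$-estimate of Lemma~\ref{lemma2_decay} to each term, and substitute $\|v^j-v^{j-1}\|_{\widetilde{GX^{k-1}}}\le E_{k-1}(\tfrac23)^{j-1}$ (which also bounds the $GX^0$-norm) together with $D_0,D_{k-1},M_k$. Writing $a_j:=\|v^{j+1}-v^j\|_{GX^k}$, every contribution except those carrying $\|v^j-v^{j-1}\|_{GX^k}$ decays like $(\tfrac23)^{j-1}$, and the latter come with total coefficient $4C_1\delta$, so one arrives at
\[
a_j\le 4C_1\delta\, a_{j-1}+K_k(\tfrac23)^{j-1},\qquad 4C_1\delta\le\tfrac12,
\]
where $K_k$ is a finite constant built from $M_k,D_{k-1},E_{k-1}$ and the constants of Lemma~\ref{lemma2_decay}.

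Since $\tfrac12<\tfrac23$, an elementary induction taking $\widetilde E_k:=\max\{\|B(v^0,v^0)\|_{GX^k},\,6K_k\}$ gives $a_j\le\widetilde E_k(\tfrac23)^j$ (the base case $a_0=\|B(v^0,v^0)\|_{GX^k}\le\widetilde E_k$ being finite by Lemmas~\ref{lemma1_decay}--\ref{lemma2_decay}), and $E_k:=E_{k-1}+\widetilde E_k$ then controls $\|v^{j+1}-v^j\|_{\widetilde{GX^k}}$. Summability of the differences together with completeness of $\widetilde{GX^k}$ (a finite intersection of weighted $L^\infty$ spaces) gives convergence of $(v^j)$ in $\widetilde{GX^k}$, which is exactly the conclusion.

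The hard part — essentially the only subtlety — is keeping the per-level contraction rate strictly below $1$ uniformly in $k$. This succeeds precisely because in Lemma~\ref{lemma2_decay} the $k$-dependent constants $C_0(k)$ and $C(k)$ multiply only $GX^0$- or $\widetilde{GX^{k-1}}$-norms, i.e.\ quantities already pinned down at the previous stage, whereas the genuinely top-order products $\|u\|_{GX^0}\|v\|_{GX^k}$ and $\|u\|_{GX^k}\|v\|_{GX^0}$ carry the fixed constant $C_1$. Were $C_1$ permitted to grow with $k$, the recursion would cease to contract and one would be forced to close the whole scale of estimates at once rather than inductively.
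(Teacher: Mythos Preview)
Your argument is correct and is precisely the standard induction-on-$k$ scheme that the paper defers to \cite{ref4}: bootstrap the Picard contraction at level $GX^0$, then at each new level $k$ use the fact that in Lemma~\ref{lemma2_decay} only the $k$-independent constant $C_1$ multiplies the top-order cross terms, so the same smallness of $\|u_0\|_{\mathcal B}$ yields a uniform contraction rate and the lower-order remainder, already controlled by the inductive hypothesis, becomes harmless forcing. Your closing remark about why $C_1$ must be $k$-independent is exactly the point.
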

The proof is omitted, which is exactly the same with Lemma 4.3. in \cite{ref4} 
with different formulation for $B$.
Then the lemma implies that $v^j$ to $v$ converges in $\widetilde{GX^k}$,
where $v$ is the solution in the sense of Theorem~\ref{main_theorem},
therefore this completes the proof of Theorem~\ref{decay_theorem}.

\smallskip
As a straightforward corollary of the theorem, we get the decay of space derivatives:
\begin{corollary}
    If the initial data $\|u_0\|_{\mathcal{B}}$ is small enough,
    then the solution constructed in Theorem~\ref{main_theorem} satisfies 
    $$\|\nabla^k u\|_{L^\infty} \leq C \frac1{t^{(1-\frac1{2\alpha})+\frac{k}{2\alpha}}}$$
    for any $t\geq 0$ and $k > 0$.
\end{corollary}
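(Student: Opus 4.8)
The plan is to read this off directly from Theorem~\ref{decay_theorem}, which already asserts that for sufficiently small $\|u_0\|_{\mathcal{B}}$ the solution $u$ constructed in Theorem~\ref{main_theorem} belongs to $GX^k$ for every $k\geq 0$. Once that membership is in hand, the corollary is nothing more than an unwinding of the definition of the $GX^k$ norm, so the bulk of the work — Lemma~\ref{lemma2_decay}, the choice of $m=m(k)$, and the induction embedded in Lemma~\ref{lemma3_decay} — lies upstream of this statement.

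First I would fix $k>0$ and apply Theorem~\ref{decay_theorem} to obtain $\|u\|_{GX^k}=:C_k<\infty$, where $C_k$ depends on $k$ and on $\|u_0\|_{\mathcal{B}}$ through the constants $D_k,E_k$ of Lemma~\ref{lemma3_decay}. By the very definition of the norm,
$$\|u\|_{GX^k}=\sup_{t>0}\, t^{(1-\frac1{2\alpha})+\frac{k}{2\alpha}}\,\|\nabla^k u(\cdot,t)\|_{L^\infty},$$
so for every fixed $t>0$ one has $t^{(1-\frac1{2\alpha})+\frac{k}{2\alpha}}\|\nabla^k u(\cdot,t)\|_{L^\infty}\leq C_k$, which is precisely the asserted estimate after dividing through by the (positive) power of $t$. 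At $t=0$ the right-hand side $C_k\, t^{-((1-\frac1{2\alpha})+\frac{k}{2\alpha})}$ is $+\infty$, so the inequality "for any $t\geq 0$" holds vacuously there; I would note this for completeness.

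The only point deserving a word of care is the quantifier on smallness: the hypothesis "$\|u_0\|_{\mathcal{B}}$ small enough" should be read as the same $k$-independent threshold used in Theorem~\ref{decay_theorem} and Lemma~\ref{lemma3_decay}. This is legitimate because in Lemma~\ref{lemma2_decay} the only order-$k$ nonlinear contributions carry the $k$-independent constant $C_1$ multiplied by $\|u\|_{GX^0}$, and $\|u\|_{GX^0}$ is already controlled by $\|u_0\|_{\mathcal{B}}$ via Theorem~\ref{main_theorem}; the remaining terms involve only strictly lower-order norms and act as a source in the contraction, so the iteration in $\widetilde{GX^k}$ closes without shrinking the smallness threshold as $k$ grows. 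I therefore do not anticipate any real obstacle in this proof — it is a genuine corollary, and the burden of the argument was discharged in establishing Theorem~\ref{decay_theorem}.
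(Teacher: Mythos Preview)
Your proposal is correct and matches the paper's approach: the paper presents this as a ``straightforward corollary'' of Theorem~\ref{decay_theorem} without an explicit proof, and your argument---reading the bound off from $u\in GX^k$ by unwinding the definition of the norm---is exactly what is intended. Your remarks on the $t=0$ endpoint and on the $k$-independence of the smallness threshold are accurate and more detailed than anything the paper spells out.
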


\section{Analyticity of the solution in space variable}
In this section, we prove the following result.
\begin{theorem}
    If $\|u_0\|_{\mathcal{B}}$ is sufficiently small, 
    then the solution constructed in Theorem~\ref{main_theorem} $u$ is analytic in space variable.
\end{theorem}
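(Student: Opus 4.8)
The plan is to promote the iteration behind Theorem~\ref{decay_theorem} to one that keeps track, \emph{quantitatively in $k$}, of the constants controlling $\|u\|_{GX^k}$, and to show that they grow no faster than the rate $C_0^k\,k!$ that characterizes real-analyticity. Fix a large constant $C_0$ to be chosen, and introduce the weighted space
\[
 \mathcal{G}=\Big\{u:\ \|u\|_{\mathcal{G}}:=\sup_{k\ge 0}\frac{(k+1)^2}{C_0^k\,k!}\,\|u\|_{GX^k}<\infty\Big\}.
\]
A function with $\|u\|_{\mathcal{G}}<\infty$ satisfies, for every multi-index $\gamma$ and every $t>0$,
\[
 \|\partial^\gamma u(\cdot,t)\|_{L^\infty(\mathbb{R}^n)}\le \|u\|_{\mathcal{G}}\,\frac{C_0^{|\gamma|}\,|\gamma|!}{(|\gamma|+1)^2}\,t^{-(1-\frac1{2\alpha})-\frac{|\gamma|}{2\alpha}},
\]
and summing the Taylor series at an arbitrary base point (using $\sum_{|\gamma|=k}1/\gamma!=n^k/k!$ and $\sum_k (k+1)^{-2}<\infty$) shows that $x\mapsto u(x,t)$ agrees with an absolutely convergent power series on every ball of radius $<(nC_0)^{-1}t^{1/(2\alpha)}$, hence extends holomorphically to the complex tube $\{|\mathrm{Im}\,z|<c\,t^{1/(2\alpha)}\}$; in particular it is real-analytic in $x$, with radius of analyticity bounded below by $c\,t^{1/(2\alpha)}$. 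So the theorem follows once we show that the solution $u$ of Theorem~\ref{main_theorem} lies in $\mathcal{G}$ for $\|u_0\|_{\mathcal{B}}$ small.

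To establish $u\in\mathcal{G}$ I would rerun the contraction argument of Theorem~\ref{main_theorem} \emph{in $\mathcal{G}$}, for $\Psi(u)=S(t)u_0-B(u,u)$ with $B(u,v)=V\Pi\nabla(u\otimes v)$ as in Lemma~\ref{lemma2_decay}. For the linear part, splitting $\partial^\gamma S(t)$ ($|\gamma|=k$) into $k$ single-derivative factors acting on $S(t/2k)$ composed with $S(t/2)$ (the device used inside the proof of Lemma~\ref{precise_estimate_Pi_nabla_k_Phi}) and using $\|\partial_i S(\tau)\|_{L^1_{\mathrm{ker}}}\lesssim \tau^{-1/(2\alpha)}$ from \eqref{estimate_nabla_fractional_temperature} together with $\|S(\tau)u_0\|_{L^\infty}\lesssim \|u_0\|_{\mathcal B}\,\tau^{-(1-1/(2\alpha))}$ yields $\|S(t)u_0\|_{GX^k}\lesssim C^k k^{k/(2\alpha)}\|u_0\|_{\mathcal B}$; since $\frac{(k+1)^2 C^k k^{k/(2\alpha)}}{C_0^k k!}\to0$ (because $k^{k/(2\alpha)}/k!\le e^k k^{k(1-2\alpha)/(2\alpha)}\to0$ super-exponentially, as $\alpha>\tfrac12$), the linear term is harmless. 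For the bilinear part I would feed the estimate of Lemma~\ref{lemma2_decay},
\[
 \|B(u,v)\|_{GX^k}\le C_0(k)\|u\|_{GX^0}\|v\|_{GX^0}+C_1\big(\|u\|_{GX^0}\|v\|_{GX^k}+\|u\|_{GX^k}\|v\|_{GX^0}\big)+C_2\!\!\sum_{l=1}^{k-1}\binom kl\|u\|_{GX^l}\|v\|_{GX^{k-l}},
\]
into the definition of $\|\cdot\|_{\mathcal{G}}$. The two $C_1$-terms are immediately $\lesssim\|u\|_{\mathcal G}\|v\|_{\mathcal G}$ since $\|u\|_{GX^0}\le\|u\|_{\mathcal G}$; the Leibniz sum, after multiplication by $(k+1)^2/(C_0^k k!)$ and using $\binom kl\,l!\,(k-l)!=k!$, becomes $C_2(k+1)^2\sum_{l=1}^{k-1}\frac{1}{(l+1)^2(k-l+1)^2}\,\|u\|_{\mathcal G}\|v\|_{\mathcal G}\lesssim C_2\|u\|_{\mathcal G}\|v\|_{\mathcal G}$. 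Finally, tracing the proof of Lemma~\ref{lemma2_decay} with its tuned parameter $m(k)=k^{((2\alpha-1)k-(2\alpha+1))/(n+k+1)}$ one finds $C_0(k)\lesssim C^k k^{(k+1)/(2\alpha)}m(k)^{(k+n+1)/(2\alpha)}=C^k k^{\,k-1}\le (Ce)^k k!$, so the $C_0(k)$-term contributes $\lesssim (Ce/C_0)^k(k+1)^2\,\|u\|_{\mathcal G}\|v\|_{\mathcal G}$, which is bounded once $C_0>Ce$. Hence $\|B(u,v)\|_{\mathcal G}\lesssim\|u\|_{\mathcal G}\|v\|_{\mathcal G}$, $\Psi$ is a contraction on a small ball of $\mathcal{G}$ when $\|u_0\|_{\mathcal B}$ is small, and its fixed point — which solves the mild equation in $GX$ with small norm — coincides with $u$ by the uniqueness in Theorem~\ref{main_theorem}; thus $u\in\mathcal{G}$.

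The step I expect to be the main obstacle is the bound $C_0(k)\lesssim C^k k!$ for the $GX^0\times GX^0\to GX^k$ constant: this is precisely where the \emph{sharp} exponent $k^{k/(2\alpha)}$ of Lemma~\ref{precise_estimate_Pi_nabla_k_Phi}, rather than the cruder Lemma~\ref{estimate_Pi_nabla_k_Phi}, is indispensable, since only the interplay of that gain with the specific time-splitting $m(k)$ brings the far-diagonal part of $B$ down from a Gevrey-class-greater-than-$1$ rate to the analytic rate $k!$. A secondary care point is that the $C_0^k$ built into the weight cancels against $\|u\|_{GX^l}\|v\|_{GX^{k-l}}$ and so is useless against the Leibniz terms; consequently the polynomial correction in $\|\cdot\|_{\mathcal G}$ is not cosmetic but essential, and its exponent must be taken large enough (any value $>1$; I use $2$) to guarantee $\sum_l (l+1)^{-2}(k-l+1)^{-2}\lesssim(k+1)^{-2}$. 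Once $u\in\mathcal G$ is in hand, the passage to the holomorphic extension in the first paragraph is routine.
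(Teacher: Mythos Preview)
Your proposal is correct. Both your argument and the paper's rest on the same three core ingredients: the sharp kernel bound of Lemma~\ref{precise_estimate_Pi_nabla_k_Phi}, the bilinear estimate of Lemma~\ref{lemma2_decay}, and the tuned splitting parameter $m(k)=k^{((2\alpha-1)k-(2\alpha+1))/(n+k+1)}$ that makes $C_0(k)\sim C^k k^{k-1}$. The difference lies in how the estimates are closed. The paper works \emph{a posteriori} on the solution $u$ already furnished by Theorem~\ref{main_theorem} and proves $\|u\|_{GX^k}\le C^{k-1}k^{k-1}$ by induction on $k$: the top-order term $2C_1\|u\|_{GX^0}\|u\|_{GX^k}$ is absorbed into the left side using smallness of $\|u\|_{GX^0}$, and the Leibniz sum $\sum_{l=1}^{k-1}\binom{k}{l}\,l^{l-1}(k-l)^{k-l-1}\lesssim k^{k-1}$ is handled by a combinatorial lemma imported from \cite{ref5} (Lemma~\ref{combinaorial_result}). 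You instead bundle all $k$ at once into the weighted norm $\|\cdot\|_{\mathcal G}$ and rerun the contraction directly in $\mathcal G$; the weight $(k+1)^2/(C_0^k k!)$ substitutes the elementary convolution bound $\sum_l (l+1)^{-2}(k-l+1)^{-2}\lesssim (k+1)^{-2}$ for the cited combinatorial lemma, and the $C_0^k$ in the weight kills the $C_0(k)$ term once $C_0$ is large. Your packaging is a bit more self-contained (no external combinatorial input) and yields the analytic bound and the existence in one stroke; the paper's induction is perhaps more transparent about why smallness of $\|u_0\|_{\mathcal B}$ enters only through the single absorption step. The linear estimate $\|S(t)u_0\|_{GX^k}\lesssim C^k k^{k/(2\alpha)}\|u_0\|_{\mathcal B}$ that you sketch by factorizing $S(t)$ is proved in the paper by a Littlewood--Paley argument (Lemma~\ref{lemma1_analyticity}); either route gives the same growth rate, which is strictly sub-factorial since $\alpha>\tfrac12$.
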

By applying the Stirling's formula, it suffices to prove 
$$\|\nabla^k u\|_{L^\infty} \lesssim C^k k^{k-1} \frac1{t^{(1-\frac1{2\alpha})+\frac{k}{2\alpha}}},$$
where $C$ is a constant independent of $k$.
And by definition, we will establish the estimate that 
$$\|u\|_{GX^k} \lesssim C^{k-1}k^{k-1},$$
where $k>K$ for some $K$ sufficiently large.

\begin{lemma}\label{lemma1_analyticity}
    $$\|\nabla^k S(t)u_0\|_{L^\infty} \leq C^{k+2}k^{\frac{k+2}{2\alpha}} t^{-(1-\frac1{2\alpha})-\frac{k}{2\alpha}}\|u_0\|_{\mathcal{B}}.$$
\end{lemma}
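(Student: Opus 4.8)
The plan is to combine the semigroup property of $S(t)$ with the decay of the free evolution supplied by Theorem~\ref{equivalence_of_norms}; the case $k=0$ is exactly the bound $\|S(t)u_0\|_{L^\infty}\lesssim t^{-(1-\frac1{2\alpha})}\|u_0\|_{\mathcal B}$ contained there, so assume $k\ge1$. First I would write $S(t)=S(t/2)\circ S(t/2)$ and set $g:=S(t/2)u_0$, so that by Theorem~\ref{equivalence_of_norms}
$$\|g\|_{L^\infty}=\|S(t/2)u_0\|_{L^\infty}\lesssim (t/2)^{-(1-\frac1{2\alpha})}\|u_0\|_{\mathcal B}\lesssim t^{-(1-\frac1{2\alpha})}\|u_0\|_{\mathcal B}.$$
Since $\nabla^{k}S(t)u_0=\nabla^{k}S(t/2)g=\bigl(\nabla^{k}\Phi(t/2,\cdot)\bigr)*g$, Young's inequality reduces everything to the kernel estimate $\|\nabla^{k}\Phi(t/2,\cdot)\|_{L^1}\lesssim C^{k}k^{\frac{k}{2\alpha}}t^{-\frac{k}{2\alpha}}$.

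To prove this kernel bound I would exploit $S(t/2)=S\bigl(\tfrac{t}{2k}\bigr)^{k}$, i.e. $\Phi(t/2,\cdot)=\Phi\bigl(\tfrac{t}{2k},\cdot\bigr)^{*k}$ ($k$-fold convolution), and distribute the $k$ first-order derivatives making up $\nabla^{k}=\partial_{x_{i_1}}\cdots\partial_{x_{i_k}}$ one onto each convolution factor, using $\partial_{x_i}(F*G)=(\partial_{x_i}F)*G$; this gives
$$\nabla^{k}\Phi(t/2,\cdot)=\bigl(\partial_{x_{i_1}}\Phi(\tfrac{t}{2k},\cdot)\bigr)*\cdots*\bigl(\partial_{x_{i_k}}\Phi(\tfrac{t}{2k},\cdot)\bigr),$$
so that by Young $\|\nabla^{k}\Phi(t/2,\cdot)\|_{L^1}\le\|\nabla\Phi(\tfrac{t}{2k},\cdot)\|_{L^1}^{k}$. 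The scaling identity $\|\nabla\Phi(\tau,\cdot)\|_{L^1}=\tau^{-\frac1{2\alpha}}\|\nabla\Phi(1,\cdot)\|_{L^1}$ then reduces matters to the finiteness of $C_0:=\|\nabla\Phi(1,\cdot)\|_{L^1}$, which follows by rerunning the Littlewood--Paley and integration-by-parts argument in the proof of Lemma~\ref{integrability_fractional_temperature} with the monomial $\xi_j$ in place of $|\xi|^{N}$ (yielding $|\nabla\Phi(1,x)|\lesssim(1+|x|)^{-n-1}$, which is integrable). Hence $\|\nabla^{k}\Phi(t/2,\cdot)\|_{L^1}\le C_0^{k}\bigl(\tfrac{t}{2k}\bigr)^{-\frac{k}{2\alpha}}=C_0^{k}2^{\frac{k}{2\alpha}}k^{\frac{k}{2\alpha}}t^{-\frac{k}{2\alpha}}$.

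Multiplying the two bounds gives $\|\nabla^{k}S(t)u_0\|_{L^\infty}\lesssim C_0^{k}2^{\frac{k}{2\alpha}}k^{\frac{k}{2\alpha}}\,t^{-(1-\frac1{2\alpha})-\frac{k}{2\alpha}}\|u_0\|_{\mathcal B}$, and absorbing $2^{\frac{k}{2\alpha}}$ together with the implicit constant into a single constant $C\ge1$ yields $\|\nabla^{k}S(t)u_0\|_{L^\infty}\le C^{k+1}k^{\frac{k}{2\alpha}}t^{-(1-\frac1{2\alpha})-\frac{k}{2\alpha}}\|u_0\|_{\mathcal B}$, which implies the stated inequality since $C^{k+1}\le C^{k+2}$ and $k^{\frac{k}{2\alpha}}\le k^{\frac{k+2}{2\alpha}}$ for $k\ge1$. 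I do not expect a genuine obstacle here; the only points that need care are the combinatorial bookkeeping in spreading $\nabla^{k}$ over the $k$-fold convolution so that exactly one first-order derivative lands on each factor, and the (routine) verification that $\nabla\Phi(1,\cdot)\in L^{1}$.
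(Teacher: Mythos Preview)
Your proof is correct and in fact cleaner than the paper's. The paper argues via a Littlewood--Paley decomposition: it chooses $N$ with $2^{N}\sim t^{-1/(2\alpha)}$, splits $\sum_{j}P_{j}u_{0}$ into low and high frequencies, and on each piece combines the Besov bound $\|P_{j}u_{0}\|_{L^\infty}\le 2^{j(2\alpha-1)}\|u_{0}\|_{\mathcal B}$ with the derivative estimate $\|\nabla^{k}S(t)f\|_{L^\infty}\le C^{k}(k/t)^{k/(2\alpha)}\|f\|_{L^\infty}$ (obtained, as in your argument, by writing $S(t)=S(t/k)^{k}$ and putting one derivative on each factor); summing the two geometric series produces the extra $k^{2/(2\alpha)}$ in the final exponent. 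Your route bypasses the frequency split entirely by invoking Theorem~\ref{equivalence_of_norms} once to pass from $\mathcal B$ to $L^\infty$ via $S(t/2)$, and then applying the same semigroup-split kernel bound to the remaining $\nabla^{k}S(t/2)$. This is more direct, uses only tools already established in the paper, and even yields the slightly sharper constant $C^{k+1}k^{k/(2\alpha)}$; the paper's LP argument, on the other hand, makes the dependence on the Besov structure more explicit. Either way the core mechanism---distributing $k$ first-order derivatives over a $k$-fold convolution of heat kernels---is the same.
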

\begin{proof}
    Take $N$ such that $2^N\sim t^{-\frac1{2\alpha}}$, then 
    $$\begin{aligned}
        \|\nabla^k S(t)u_0\|_{L^\infty} 
    \leq& \sum_{j\leq N}\|P_j(\nabla^kS(t)u_0)\|_{L^\infty} + \sum_{j > N}\|P_j(\nabla^kS(t)u_0)\|_{L^\infty}\\
    \leq& \sum_{j\leq N}\|P_j(\nabla^kS(t)u_0)\|_{L^\infty} + C\sum_{j > N}2^{-2j}\|P_j(\nabla^{k+2}S(t)u_0)\|_{L^\infty}\\
    \leq& \sum_{j\leq N}\|(\nabla^kS(t))P_ju_0\|_{L^\infty} + C\sum_{j > N}2^{-2j}\|(\nabla^{k+2}S(t))P_ju_0\|_{L^\infty}\\
    \leq& \sum_{j\leq N}\left(C\left(\frac{k}{t}\right)^{\frac1{2\alpha}}\right)^k\|P_ju_0\|_{L^\infty} + C\sum_{j > N}2^{-2j}\left(C\left(\frac{k+2}{t}\right)^{\frac1{2\alpha}}\right)^{k+2}\|P_ju_0\|_{L^\infty}\\
    \leq& C^{k+2}\|u_0\|_{\mathcal{B}} \left( \sum_{j\leq N} \left(\frac{k}{t}\right)^{\frac{k}{2\alpha}} 2^{j(2\alpha-1)} + C\sum_{j > N}\left(\frac{k+2}{t}\right)^{\frac{k+2}{2\alpha}} 2^{-2j+j(2\alpha-1)} \right)\\
    \lesssim& C^{k+2}\|u_0\|_{\mathcal{B}} \left( \left(\frac{k}{t}\right)^{\frac{k}{2\alpha}} 2^{N(2\alpha-1)} + C\left(\frac{k+2}{t}\right)^{\frac{k+2}{2\alpha}} 2^{-2N+N(2\alpha-1)} \right)\\
    \lesssim& C^{k+2}\|u_0\|_{\mathcal{B}}k^{\frac{k+2}{2\alpha}} \left( t^{-\frac{k+2\alpha-1}{2\alpha}} + t^{-\frac{k+2-2+(2\alpha-1)}{2\alpha}} \right)\\
    \lesssim& C^{k+2}\|u_0\|_{\mathcal{B}}k^{\frac{k+2}{2\alpha}} t^{-\frac{k+2\alpha-1}{2\alpha}}
    ,\end{aligned}
    $$ 
    where the fourth inequality follows from applying \eqref{estimate_nabla_fractional_temperature} $k$ times,
    the fifth inequality follows from the definition of Besov space and Theorem~\ref{equivalence_of_norms}.
\end{proof}

\bigskip
Now we show the key result 
\begin{equation}\label{key_result}
    \|u\|_{GX^k}\leq C^{k-1}k^{k-1}.
\end{equation} 
by induction on $k$.
Recall a combinatorial result from \cite{ref5}.
\begin{lemma}\label{combinaorial_result}
    Let $\delta > \frac12$, then there exists some constant $C = C(\delta) > 0$, such that 
    $$\sum_{\gamma\leq \alpha} \left(\begin{matrix}\alpha \\ \gamma\end{matrix}\right) |\gamma|^{|\gamma|-\delta} |\alpha-\gamma|^{|\alpha-\gamma|-\delta} 
    \leq C |\alpha|^{|\alpha|-\delta}$$
    for all $\alpha\in\mathbb N^n_0$.
\end{lemma}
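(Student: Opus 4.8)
The plan is to collapse the multi-index sum to a one-dimensional sum, then estimate that one-dimensional sum by Stirling's formula; the hypothesis $\delta>\tfrac12$ will enter at exactly one place, namely where a numerical series has to converge.

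First I would reduce to the scalar ($n=1$) case. Grouping the multi-indices $\gamma\le\alpha$ according to the value of $|\gamma|$ and using the Vandermonde identity $\sum_{\gamma\le\alpha,\ |\gamma|=j}\binom{\alpha}{\gamma}=\binom{|\alpha|}{j}$ (read off the coefficient of $x^j$ in $\prod_i(1+x)^{\alpha_i}=(1+x)^{|\alpha|}$), one gets for any $F,G$ on $\mathbb{N}_0$
\[
\sum_{\gamma\le\alpha}\binom{\alpha}{\gamma}F(|\gamma|)\,G(|\alpha-\gamma|)=\sum_{j=0}^{m}\binom{m}{j}F(j)\,G(m-j),\qquad m:=|\alpha| ,
\]
so it suffices to prove $\sum_{j=0}^{m}\binom{m}{j}\,j^{\,j-\delta}(m-j)^{\,m-j-\delta}\le C(\delta)\,m^{\,m-\delta}$ for all $m$. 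The extreme terms $j=0,m$ contribute at most $2m^{m-\delta}$ under the convention adopted in \cite{ref5}, so only $1\le j\le m-1$ (hence $m\ge2$) needs attention.

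The key step is a sharp estimate of $\binom{m}{j}j^j(m-j)^{m-j}$. The trivial bound by $(j+(m-j))^m=m^m$ is \emph{not} enough, since it would leave $\sum_j(j(m-j))^{-\delta}$, which converges only for $\delta>1$. Instead I would use the two-sided Stirling bound $\sqrt{2\pi n}\,(n/e)^n\le n!\le e\sqrt n\,(n/e)^n$ to obtain, uniformly in $1\le j\le m-1$,
\[
\binom{m}{j}j^{j}(m-j)^{m-j}\ \le\ C_0\,m^{m}\,\Bigl(\tfrac{m}{j(m-j)}\Bigr)^{1/2}
\]
(equivalently, this quantity divided by $m^m$ is the probability that a $\mathrm{Bin}(m,j/m)$ variable equals its mean, which is $\asymp (m/(j(m-j)))^{1/2}$ by the local limit theorem). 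Multiplying by $j^{-\delta}(m-j)^{-\delta}$, using the symmetry $j\leftrightarrow m-j$ and then $m-j\ge m/2$ on the lower half,
\[
\sum_{j=1}^{m-1}\binom{m}{j}j^{\,j-\delta}(m-j)^{\,m-j-\delta}\ \le\ C_0\,m^{m+1/2}\cdot 2\!\!\sum_{1\le j\le m/2}\!\!\bigl(\tfrac{m}{2}j\bigr)^{-1/2-\delta}\ \le\ C_0\,2^{3/2+\delta}\,\zeta(\tfrac12+\delta)\;m^{\,m-\delta},
\]
where the series $\sum_{j\ge1}j^{-1/2-\delta}$ converges precisely because $\delta>\tfrac12$. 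Adding the two boundary terms yields the claim.

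The main obstacle is the estimate just described: one has to recognize that the naive inequality $\binom{m}{j}j^j(m-j)^{m-j}\le m^m$ loses too much and must be sharpened to gain the Gaussian-type factor $(m/(j(m-j)))^{1/2}$, which is exactly what makes the resulting series summable in the range $\tfrac12<\delta\le 1$ that the analyticity argument needs. The Vandermonde reduction and the final summation are routine, and the hypothesis $\delta>\tfrac12$ is used only to guarantee $\sum_j j^{-1/2-\delta}<\infty$.
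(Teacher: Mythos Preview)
The paper does not prove this lemma; it simply cites it from \cite{ref5}. Your argument is therefore a self-contained proof where the paper offers none, and it is correct: the Vandermonde reduction to a one-variable sum is clean, and the crucial sharpening $\binom{m}{j}j^{j}(m-j)^{m-j}\le C_0\,m^{m}\bigl(m/(j(m-j))\bigr)^{1/2}$ via Stirling is exactly what is needed to push the admissible range from $\delta>1$ down to $\delta>\tfrac12$. One small caveat worth making explicit in a final write-up is the convention at the endpoints $j=0,m$ (since $0^{-\delta}$ is otherwise undefined); you already flag this, but spelling out the convention rather than deferring to \cite{ref5} would make the argument fully self-contained.
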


From Lemma~\ref{lemma2_decay}, we know 
$$\|B(u,v)\|_{GX^k}
\leq C_{0}(k)\|u\|_{GX^0}\|v\|_{GX^0} + C_1\|u\|_{GX^0}\|v\|_{GX^k} + C_1\|u\|_{GX^k}\|v\|_{GX^0} 
+ C\sum_{l=1}^{k-1} \left(\begin{matrix}k \\ l\end{matrix}\right)\|u\|_{GX^l} \|v\|_{GX^{k-l}}.$$
Now let us assume that \eqref{key_result} is valid for $1,\cdots,k-1$, then for $k$,
using the induction hypothesis and applying Lemma~\ref{combinaorial_result}, we have 
$$\sum_{l=1}^{k-1}\left(\begin{matrix}k \\ l\end{matrix}\right) \|u\|_{GX^l} \|u\|_{GX^{k-l}}
\leq \sum_{l=1}^{k-1} \left(\begin{matrix}k \\ l\end{matrix}\right) C^{l-1}k^{l-1} C^{k-l-1}(k-l)^{k-l-1}\leq C^{k-2}k^{k-1}.$$
Then apply Lemma~\ref{lemma1_analyticity}, we have 
$$\begin{aligned}
    \|u\|_{GX^k} \leq& \|S(t)u_0\|_{GX^k} + \|B(u,u)\|_{GX^k}\\
    \lesssim& C^{k-1}k^{\frac{k+2}{2\alpha}} \|u_0\|_{\mathcal{B}} + C_{0}(k)\|u\|^2_{GX^0} + 2C_1\|u\|_{GX^0}\|u\|_{GX^k} + C^{k-1}k^{k-1}      
.\end{aligned}
$$
Theorem~\ref{main_theorem} tells us $\|u\|_{GX^0}$ is small, 
so that the term $2C_1\|u\|_{GX^0}\|u\|_{GX^k}$ can be incorporated into LHS.
Note that $k^{\frac{k+2}{2\alpha}} \leq k^{k-1}$ for all sufficiently large $k$.
(For small $k$, we could choose $C$ large enough.)
Thus the theorem follows from the construction of $C_0(k)$ in \eqref{lemma2_decay_estimate_1}
and the choice of $m=m(k) = k^{\frac{(2\alpha-1)k-(2\alpha+1)}{n+k+1}}$
so that $C_0(k) = C^k k^{\frac{k+1}{2\alpha}} m^{\frac{k+n+1}{2\alpha}}$ is of the form $C^{k-1}k^{k-1}$ 
from the proof of Theorem~\ref{decay_theorem}.
Here we completes the proof.

\bibliographystyle{amsplain}
\bibliography{paper}

\providecommand{\bysame}{\leavevmode\hbox to3em{\hrulefill}\thinspace}
\providecommand{\MR}{\relax\ifhmode\unskip\space\fi MR }
\providecommand{\MRhref}[2]{%
  \href{http://www.ams.org/mathscinet-getitem?mr=#1}{#2}
}
\providecommand{\href}[2]{#2}
\begin{thebibliography}{1}

\bibitem{ref3}
Hajer Bahouri, Jean-Yves Chemin, and Rapha\"{e}l Danchin, \emph{Fourier
  analysis and nonlinear partial differential equations}, Grundlehren der
  Mathematischen Wissenschaften [Fundamental Principles of Mathematical
  Sciences], vol. 343, Springer, Heidelberg, 2011. \MR{2768550}

\bibitem{ref1}
Matteo Bonforte, Yannick Sire, and Juan~Luis V\'{a}zquez, \emph{Optimal
  existence and uniqueness theory for the fractional heat equation}, Nonlinear
  Anal. \textbf{153} (2017), 142--168. \MR{3614666}

\bibitem{ref6}
Jean Bourgain and Nata\v{s}a Pavlovi\'{c}, \emph{Ill-posedness of the
  {N}avier-{S}tokes equations in a critical space in 3{D}}, J. Funct. Anal.
  \textbf{255} (2008), no.~9, 2233--2247. \MR{2473255}

\bibitem{ref4}
Pierre Germain, Nata\v{s}a Pavlovi\'{c}, and Gigliola Staffilani,
  \emph{Regularity of solutions to the {N}avier-{S}tokes equations evolving
  from small data in {${\rm BMO}^{-1}$}}, Int. Math. Res. Not. IMRN (2007),
  no.~21, Art. ID rnm087, 35. \MR{2352218}

\bibitem{ref5}
Charles Kahane, \emph{On the spatial analyticity of solutions of the
  {N}avier-{S}tokes equations}, Arch. Rational Mech. Anal. \textbf{33} (1969),
  386--405. \MR{245989}

\bibitem{ref0}
Herbert Koch and Daniel Tataru, \emph{Well-posedness for the {N}avier-{S}tokes
  equations}, Adv. Math. \textbf{157} (2001), no.~1, 22--35. \MR{1808843}

\bibitem{ref2}
P.~G. Lemari\'{e}-Rieusset, \emph{Recent developments in the {N}avier-{S}tokes
  problem}, Chapman \& Hall/CRC Research Notes in Mathematics, vol. 431,
  Chapman \& Hall/CRC, Boca Raton, FL, 2002. \MR{1938147}

\end{thebibliography}

\end{document}